\newtheorem{theorem}{Theorem}[section]
\newtheorem{lemma}[theorem]{Lemma}
\newtheorem{proposition}[theorem]{Proposition}
\newtheorem{corollary}[theorem]{Corollary}
\theoremstyle{definition}
\theoremstyle{remark}
\newtheorem*{remark}{Remark}
\newtheorem*{notation}{Notation}
\renewcommand{\hat}{\widehat}
\renewcommand{\tilde}{\widetilde}
\begin{document}



\title{Nilpotent primitive linear groups over finite fields}



\author{A. S. Detinko}
\address{ Department of Computer Science,
School of Computing and Engineering,
University of Huddersfield, Queensgate,
Huddersfield
HD1 3DH, UK}
\email{detinko.alla@gmail.com}
\author{D. L. Flannery}
\address{Department of Mathematics, University
 of Galway, Galway H91 TK33, Ireland}
\email{dane.flannery@universityofgalway.ie}

\maketitle



\section{Introduction}

 In this paper we investigate the structure
 of groups as in the title. Our work builds on work of
 several other authors, namely Konyukh \cite{Konyuh},
 Leedham-Green and Plesken \cite{LGP}, 
 and  Zalesskii \cite{Zalesskii}, who  have described 
 the abstract isomorphism types of the groups. We obtain 
 more detailed descriptions, in particular
 explaining how group structure depends on the existence 
 of an abelian primitive 
 subgroup. Additionally we show that isomorphism
 type of a group completely determines its 
 conjugacy class in the relevant general linear group.

 A  brief outline of the paper now follows.
 In Section~\ref{ablin} we review standard material on
 abelian (cyclic) irreducible linear groups.
 In Section~\ref{fund} fundamental structural results are 
 given. In Section~\ref{deg2} nilpotent primitive linear 
 groups of degree 2 are classified up to conjugacy, 
 and then groups of degree greater than 2 are treated 
 thoroughly in Section~\ref{greater2}. 
 The final Section~\ref{summary} summarises our results. 

 Throughout, $\mathbb{F}$ is a finite field of size $q$ and 
 characteristic $p$, and $G \leq \mathrm{GL}(n,\mathbb{F})$,
 $n>1$. The natural (right) $\mathbb{F}G$-module 
 of dimension $n$ is denoted $V$. Whenever we
 refer to  ``primitive'' or ``imprimitive'' 
 linear groups, we are implicitly assuming 
 them to be irreducible.
%

\section{Abelian linear groups}
\label{ablin}

 Suppose $A$ is an abelian irreducible subgroup of 
 $\mathrm{\mathrm{GL}}(n, \mathbb{F})$, and   
 denote by $\Delta$ the enveloping algebra 
 $\langle A  \rangle _\mathbb{F}$ of $A$.
\begin{proposition}
\label{singdef}
 $\Delta$ is a field, $|\Delta : \mathbb{F}1_n| =n$,
 so that $A$ is cyclic of order dividing $q^n-1$. 
 Hence  $\Delta^{\! \times}$ is a maximal abelian  
 subgroup of $\mathrm{GL}(n, \mathbb{F})$.
\end{proposition}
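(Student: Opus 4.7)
The plan is to prove the four assertions in the order stated: first that $\Delta$ is a field, then that $|\Delta : \mathbb{F}1_n| = n$, then the cyclicity of $A$ and the bound on its order, and finally the maximality of $\Delta^{\times}$.

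To show $\Delta$ is a field, I would note that $\Delta$ is a commutative finite-dimensional $\mathbb{F}$-algebra (commutative because $A$ is), hence a finite commutative ring, so it suffices to rule out zero divisors. Given any nonzero $\delta \in \Delta$, commutativity of $\Delta$ means that both $\ker \delta$ and $\mathrm{im}\, \delta$, as endomorphisms of $V$, are $\Delta$-submodules, and in particular $\mathbb{F}A$-submodules of $V$. Irreducibility of $V$ then forces $\delta$ to have zero kernel and full image, so $\delta$ acts as a unit of $\mathrm{End}_\mathbb{F}(V)$; thus $\Delta$ has no zero divisors, and a finite commutative domain is a field.

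For the dimension claim, $V$ becomes a vector space over the field $\Delta$; every $\Delta$-line in $V$ is $A$-invariant, so irreducibility of $V$ gives $\dim_\Delta V = 1$. Hence $n = \dim_\mathbb{F} V = \dim_\Delta V \cdot |\Delta : \mathbb{F}1_n| = |\Delta : \mathbb{F}1_n|$, so $\Delta$ is a field of size $q^n$. Its multiplicative group is cyclic of order $q^n - 1$, and $A \leq \Delta^{\times}$ inherits both properties. For maximality, let $B$ be any abelian subgroup of $\mathrm{GL}(n,\mathbb{F})$ with $\Delta^{\times} \leq B$. Then $B$ contains the irreducible group $A$ and so is itself irreducible; applying the first three parts of the proposition to $B$ shows that $\langle B \rangle_\mathbb{F}$ is a field of $\mathbb{F}$-dimension $n$. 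Since it contains $\Delta$, which also has $\mathbb{F}$-dimension $n$, the two algebras coincide, forcing $B \subseteq \langle B \rangle_\mathbb{F}^{\times} = \Delta^{\times}$.

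The only delicate step is the first one, and its subtlety is conceptual rather than computational: it is precisely the commutativity of $\Delta$ that makes kernels and images of its elements $A$-invariant, which is what allows irreducibility of $V$ to promote the absence of zero divisors into invertibility. Once $\Delta$ is identified as a field, the remaining claims reduce to elementary facts about finite fields and a self-application of the proposition to overgroups.
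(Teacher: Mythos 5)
Your proof is correct. Note that the paper does not argue these claims directly: it cites Suprunenko (Theorem 4, p.~99) for the facts that $\Delta$ is a field with $|\Delta : \mathbb{F}1_n| = n$, and dismisses the remainder as clear. What you have written is essentially the standard argument behind that citation: the Schur-type observation that kernels and images of elements of the commutative algebra $\Delta$ are $A$-invariant, so irreducibility kills zero divisors and a finite commutative domain is a field; then viewing $V$ as a $\Delta$-vector space forces $\dim_\Delta V = 1$, giving the degree count, the identification $\Delta \cong \mathrm{GF}(q^n)$, and cyclicity of $A$. Your maximality argument (an abelian overgroup $B$ of $\Delta^{\times}$ is irreducible because it contains $A$, so $\langle B \rangle_{\mathbb{F}}$ is again a field of $\mathbb{F}$-dimension $n$ containing $\Delta$, hence equals $\Delta$, whence $B \leq \Delta^{\times}$) is exactly the routine deduction the paper leaves implicit, and it is not circular since it only reuses the already-established first parts. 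The only thing your write-up buys beyond the paper is self-containment; conversely, the paper's citation spares the details but conveys no more information.
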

\begin{proof}
 The first two claims are covered by 
 \cite[Theorem 4, p.99]{Suprunenko}. The rest is then clear.
\end{proof}
\begin{corollary}
 A subgroup $C$ of $\Delta^{\! \times}$ is irreducible if 
 and only if
 $|\langle C \rangle_{\mathbb{F}} : \mathbb{F}1_n| =n$.
\end{corollary}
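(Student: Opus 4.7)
The plan is to prove each direction separately, exploiting that $\Delta$ is already known (from Proposition~\ref{singdef}) to be a field of $\mathbb{F}$-dimension $n$.

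For the forward implication, I would observe that $C$, being a subgroup of $\Delta^{\!\times}$, is abelian; if moreover $C$ is irreducible, then Proposition~\ref{singdef} applies with $A$ replaced by $C$, yielding at once that $\langle C\rangle_{\mathbb{F}}$ is a field of $\mathbb{F}$-dimension $n$. So this direction is essentially a specialization of the proposition.

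For the converse, suppose $|\langle C\rangle_{\mathbb{F}} : \mathbb{F}1_n| = n$. Since $\langle C\rangle_{\mathbb{F}} \subseteq \Delta$ and both have $\mathbb{F}$-dimension $n$, the two enveloping algebras coincide: $\langle C\rangle_{\mathbb{F}} = \Delta$. The natural module $V$ is then a module over the field $\Delta$, and comparing $\mathbb{F}$-dimensions shows that $\dim_\Delta V = 1$; hence $V$ is simple as a $\Delta$-module. Any $\mathbb{F}C$-submodule of $V$ is stable under the $\mathbb{F}$-algebra generated by $C$, namely $\Delta$, so it is a $\Delta$-submodule, forcing it to be $0$ or $V$. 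Therefore $C$ is irreducible.

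The only delicate point is recognizing that an $\mathbb{F}C$-submodule is automatically a $\Delta$-submodule once $\Delta = \langle C\rangle_{\mathbb{F}}$; everything else is a dimension count, so I do not expect any serious obstacle.
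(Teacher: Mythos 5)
Your proof is correct and is essentially the argument the paper intends: the paper leaves the corollary as an immediate consequence of Proposition~\ref{singdef}, and your two directions (specializing the proposition to $C$, and the dimension count forcing $\langle C\rangle_{\mathbb{F}}=\Delta$ with $V$ one-dimensional over $\Delta$) are exactly the expected reasoning. No gaps.
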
 
%

 There always exist elements of ${\rm GL}(n,\mathbb{F})$ of 
 order $q^n-1$, and if $g$ is any such element then
 $\langle g  \rangle$ is irreducible.
 A cyclic subgroup of $\mathrm{GL}(n,\mathbb{F})$ of order
 $q^n-1$ is called a Singer cycle. The next few results
 about Singer cycles are well-known.
\begin{proposition}
\label{charirredab}
 A subgroup $C$ of $\Delta^{\! \times}$ is irreducible if 
 and only if $|C|$ does not divide $q^m-1$ for any 
 proper divisor $m$ of $n$.  
\end{proposition}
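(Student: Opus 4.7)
The plan is to translate the irreducibility criterion from the preceding Corollary (namely, $C$ is irreducible iff $\langle C\rangle_\mathbb{F} = \Delta$) into a purely arithmetic condition on $|C|$, by exploiting the fact that $\Delta$ is a finite field of order $q^n$ with $\Delta^{\!\times}$ cyclic.

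First I would observe that by Proposition~\ref{singdef} the algebra $\Delta$ is a field with $|\Delta:\mathbb{F}1_n|=n$, hence $\Delta$ is (isomorphic to) $\mathbb{F}_{q^n}$ and $\Delta^{\!\times}$ is cyclic of order $q^n-1$. In particular $C$, being a subgroup of a cyclic group, is itself cyclic, and for every divisor $d$ of $q^n-1$ there is a unique subgroup of $\Delta^{\!\times}$ of that order. The subfields of $\Delta$ containing $\mathbb{F}1_n$ are exactly the $\mathbb{F}_{q^m}$ for $m \mid n$, and $\mathbb{F}_{q^m}^{\!\times}$ is the unique subgroup of $\Delta^{\!\times}$ of order $q^m-1$.

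Next, by the previous Corollary, $C$ is reducible iff $|\langle C\rangle_\mathbb{F} : \mathbb{F}1_n| < n$, i.e.\ iff $\langle C\rangle_\mathbb{F} = \mathbb{F}_{q^m}$ for some proper divisor $m$ of $n$; equivalently, iff $C \subseteq \mathbb{F}_{q^m}^{\!\times}$ for some such $m$. By the cyclic structure of $\Delta^{\!\times}$ noted above, this inclusion happens precisely when $|C|$ divides $|\mathbb{F}_{q^m}^{\!\times}| = q^m-1$. Negating gives the stated criterion.

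There is no real obstacle: the whole argument reduces to the uniqueness of subgroups of a given order in a finite cyclic group, together with the identification of the subfields of $\mathbb{F}_{q^n}$. The only point to keep straight is the direction of the divisibility implication, which follows from the fact that a cyclic group of order $d$ sits inside a cyclic group of order $k$ iff $d \mid k$.
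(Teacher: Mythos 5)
Your argument is correct. Note that the paper itself gives no proof of this proposition: it is listed among the results on Singer cycles stated as ``well-known,'' so there is nothing to compare against line by line. What you supply is exactly the standard argument the paper leaves implicit: identify $\Delta$ with $\mathrm{GF}(q^n)$ via Proposition~\ref{singdef}, use the Corollary to reduce irreducibility to $\langle C\rangle_{\mathbb{F}}=\Delta$, and then exploit the lattice of subfields of $\mathrm{GF}(q^n)$ together with the uniqueness of subgroups of each order in the cyclic group $\Delta^{\!\times}$ to convert containment in a proper subfield into the divisibility condition $|C| \mid q^m-1$, $m$ a proper divisor of $n$. The only step you use silently is that $\langle C\rangle_{\mathbb{F}}$ is itself a subfield of $\Delta$ (a finite subring of a field, hence a field), which is what lets you say it equals some $\mathrm{GF}(q^m)$ with $m\mid n$; it is worth one clause, but it is routine and the proof is complete as written.
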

\begin{proposition}
\label{normcentr}
 Let $C$ be an irreducible subgroup of $\Delta^{\! \times}$.
 The $\mathrm{GL}(n,\mathbb{F})$-centraliser of 
 $C$ is $\Delta^{\! \times}$, and the 
 $\mathrm{GL}(n,\mathbb{F})$-normaliser of 
 $C$ is the semidirect product of   
 $\Delta^{\! \times}$ with a cyclic group of order
 $n$.
\end{proposition}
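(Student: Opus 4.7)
The plan has two parts. \textbf{Centraliser:} One inclusion is trivial: $\Delta^{\! \times}$ is abelian and contains $C$, so $\Delta^{\! \times} \subseteq C_{\mathrm{GL}(n,\mathbb{F})}(C)$. Conversely, since $\Delta = \langle C \rangle_{\mathbb{F}}$, any matrix centralising $C$ centralises all of $\Delta$. By Proposition~\ref{singdef}, $\dim_{\mathbb{F}} \Delta = n = \dim_{\mathbb{F}} V$, so $V$ is one-dimensional as a vector space over the field $\Delta$; hence $\mathrm{End}_{\Delta}(V) = \Delta$, and the $\mathrm{GL}(n,\mathbb{F})$-centraliser of $\Delta$ (equivalently of $C$) equals $\Delta^{\! \times}$.

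\textbf{Normaliser:} Let $N$ denote the $\mathrm{GL}(n,\mathbb{F})$-normaliser of $C$. Because $C$ generates $\Delta$ as an $\mathbb{F}$-algebra, $N$ normalises $\Delta$, and conjugation yields a homomorphism
\[
\varphi : N \longrightarrow \mathrm{Aut}_{\mathbb{F}}(\Delta) = \mathrm{Gal}(\Delta/\mathbb{F}),
\]
whose target is cyclic of order $n$. Its kernel is the centraliser of $\Delta$ in $\mathrm{GL}(n,\mathbb{F})$, already identified as $\Delta^{\! \times}$. To finish, I would lift the Frobenius generator $\sigma \colon \alpha \mapsto \alpha^q$ of $\mathrm{Gal}(\Delta/\mathbb{F})$ to an element of $N$ of order $n$: pick any nonzero $v \in V$, use $V = v\Delta$ (one-dimensional over $\Delta$) to define $\tau$ by $\tau(v\alpha) = v\sigma(\alpha)$. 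This $\tau$ is $\mathbb{F}$-linear because $\sigma$ fixes $\mathbb{F}$ pointwise, $\tau^n = 1$, and a direct computation gives $\tau \alpha \tau^{-1} = \sigma(\alpha)$ as an operator on $V$, so $\varphi(\tau) = \sigma$. Since $\Delta^{\! \times}$ is cyclic, $C$ is its unique subgroup of order $|C|$, so $\sigma(C) = C$ and thus $\tau \in N$. Then $\tau$ has order exactly $n$ (a divisor of $n$ with image of order $n$ under $\varphi$) and $\langle \tau \rangle \cap \Delta^{\! \times} = \langle \tau \rangle \cap \ker \varphi = 1$, so $N = \Delta^{\! \times} \rtimes \langle \tau \rangle$.

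The main subtlety is the construction of $\tau$ and the verification that it is $\mathbb{F}$-linear: the identification $V = v\Delta$ is what converts the intrinsically $\mathbb{F}$-semilinear Frobenius action into an honest element of $\mathrm{GL}(n,\mathbb{F})$. Passing from ``$\tau$ normalises $\Delta^{\! \times}$'' to ``$\tau$ normalises $C$'' then rests on uniqueness of subgroups of each order in the cyclic group $\Delta^{\! \times}$, a small but essential point.
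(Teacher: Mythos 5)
Your proof is correct. Note that the paper offers no argument for this proposition at all: it is listed among the results on Singer cycles that are ``well-known,'' so there is no in-paper proof to compare against. What you give is the standard argument, and it is complete: the centraliser computation via $\langle C\rangle_{\mathbb{F}}=\Delta$ and the one-dimensionality of $V$ over $\Delta$ (your unproved claim $\Delta=\langle C\rangle_{\mathbb{F}}$ is exactly the Corollary to Proposition~\ref{singdef}, or follows by applying Proposition~\ref{singdef} to $C$ itself), and the normaliser computation via the map to $\mathrm{Gal}(\Delta/\mathbb{F})$ with kernel $\Delta^{\!\times}$, surjectivity being witnessed by the $\mathbb{F}$-linear lift $\tau$ of Frobenius on the rank-one $\Delta$-module $V$. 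Your closing remarks correctly identify the two points that need care: realising the semilinear Frobenius as an honest element of $\mathrm{GL}(n,\mathbb{F})$ through the identification $V=v\Delta$, and passing from normalising $\Delta^{\!\times}$ to normalising $C$ via uniqueness of subgroups of each order in the cyclic group $\Delta^{\!\times}$.
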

\begin{proposition}
\label{conjab}
 Abelian irreducible subgroups 
 of $\mathrm{GL}(n,\mathbb{F})$
 of the same order are conjugate.
\end{proposition}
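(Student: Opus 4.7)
The plan is to reduce the claim to showing that the maximal abelian irreducible subgroups $\Delta^{\!\times}$ of $\mathrm{GL}(n,\mathbb{F})$ are all conjugate, and then use cyclicity to finish. Let $A_1,A_2$ be abelian irreducible subgroups of the same order $d$, and set $\Delta_i=\langle A_i\rangle_{\mathbb{F}}$. Proposition~\ref{singdef} gives that each $\Delta_i$ is a field with $[\Delta_i:\mathbb{F}1_n]=n$, so that $\Delta_i^{\!\times}$ is a cyclic group of order $q^n-1$ containing $A_i$.

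The main step I would carry out is the production of some $g\in\mathrm{GL}(n,\mathbb{F})$ with $g\Delta_1^{\!\times}g^{-1}=\Delta_2^{\!\times}$. Fix a generator $\sigma_2$ of $\Delta_2^{\!\times}$. Because $\mathbb{F}[\sigma_2]=\Delta_2$ has $\mathbb{F}$-dimension $n$, the minimal polynomial $p(x)$ of $\sigma_2$ is irreducible of degree $n$, hence coincides with the characteristic polynomial of $\sigma_2$. Since $\Delta_1$ is a field of size $q^n$, it contains a root $\sigma_1$ of $p(x)$; irreducibility of $p$ forces $\mathbb{F}[\sigma_1]=\Delta_1$ and so $\langle\sigma_1\rangle=\Delta_1^{\!\times}$. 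Now $\sigma_1$ and $\sigma_2$ share the same minimal polynomial, which is also their characteristic polynomial, so they have the same rational canonical form and are therefore conjugate in $\mathrm{GL}(n,\mathbb{F})$. Any conjugating element $g$ sends $\Delta_1^{\!\times}=\langle\sigma_1\rangle$ onto $\Delta_2^{\!\times}=\langle\sigma_2\rangle$.

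To close the argument, observe that $gA_1g^{-1}$ is a subgroup of the cyclic group $\Delta_2^{\!\times}$ of order $d=|A_2|$; since a cyclic group has a unique subgroup of each divisor order, $gA_1g^{-1}=A_2$.

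The only real obstacle is the middle step, namely moving one Singer cycle onto another inside $\mathrm{GL}(n,\mathbb{F})$. The matching-of-characteristic-polynomials trick above dispatches it by exploiting that $\Delta_1$ and $\Delta_2$ are abstractly isomorphic to $\mathbb{F}_{q^n}$ and therefore realise every primitive irreducible polynomial of degree $n$ over $\mathbb{F}$. A parallel route, should one prefer to avoid rational canonical forms, is to invoke Skolem--Noether: any $\mathbb{F}$-algebra isomorphism $\Delta_1\to\Delta_2$ between subfields of the central simple algebra $\mathrm{Mat}(n,\mathbb{F})$ extends to an inner automorphism, yielding the conjugating element directly.
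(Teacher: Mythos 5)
Your argument is correct. The paper itself gives no proof of Proposition~\ref{conjab} --- it is listed among the ``well-known'' facts about Singer cycles following Proposition~\ref{singdef} --- so what you have done is reconstruct the standard argument the paper leaves implicit: first conjugate the two enveloping fields (equivalently the Singer cycles $\Delta_1^{\!\times}$, $\Delta_2^{\!\times}$), then finish with the uniqueness of a subgroup of given order in a cyclic group. Both of your routes for the middle step are sound: matching the minimal polynomial, which is also the characteristic polynomial, gives equal rational canonical forms; alternatively Skolem--Noether applied to an $\mathbb{F}$-algebra isomorphism $\Delta_1\to\Delta_2$ inside $\mathrm{Mat}(n,\mathbb{F})$ produces the conjugating element at once. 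One small inference could be tightened: from $\mathbb{F}[\sigma_1]=\Delta_1$ it does not formally follow that $\sigma_1$ generates the multiplicative group $\Delta_1^{\!\times}$, since an algebra generator of a finite field need not be a primitive element. The repair is immediate and does not affect your proof: either note that the roots of $p$ are the Galois conjugates $\sigma_2^{q^j}$, all of multiplicative order $q^n-1$, or simply observe that $g\sigma_1g^{-1}=\sigma_2$ already gives $g\Delta_1g^{-1}=g\,\mathbb{F}[\sigma_1]\,g^{-1}=\mathbb{F}[\sigma_2]=\Delta_2$, hence $g\Delta_1^{\!\times}g^{-1}=\Delta_2^{\!\times}$, which is all that the final step uses.
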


 Now suppose $A$ is imprimitive, with imprimitivity
 system $\mathcal{S}$ $=$ $\{ V_1, \ldots \! \, ,V_k\}$, 
 $k > 1$. That is, the 
 components $V_i$ are subspaces of $V$,  
 $V = V_1 \oplus \cdots \oplus V_k$, 
 and $A$ permutes the $V_i$ by its natural action.
 In fact $A$ acts transitively because it is irreducible. 
 Hence $k$ divides $|A|$ and $n$, and 
 $k$ is not divisible by $p$.
 
 Let $\phi$ be the homomorphism  $A \rightarrow \mathrm{Sym}(k)$
 arising from the
 action of $A$ on $\mathcal{S}$. An abelian transitive permutation 
 group is regular, so $\phi(A)$ is cyclic of order $k$.     
\begin{lemma}
\label{sizeprim}
 For each integer $r>1$ dividing $k$, $A$  
 has a system of imprimitivity consisting of $r$
 components.
\end{lemma}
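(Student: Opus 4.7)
\medskip

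\noindent\emph{Proof plan.} The plan is to produce the required system of imprimitivity as the set of orbit sums of a suitable normal subgroup of $A$ acting on~$\mathcal{S}$. Since $\phi(A)$ is cyclic of order $k$ and $r$ divides $k$, there is a unique subgroup of $\phi(A)$ of index $r$; let $H\leq A$ be its preimage under $\phi$, so that $[A:H]=r$ and $H\supseteq\ker\phi$.

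Because $A$ is abelian, $H$ is normal in $A$. I would then invoke the standard fact that the orbits of a normal subgroup of a transitive permutation group form a system of blocks of equal size: since $A$ is transitive on $\mathcal{S}$, its $r=[A:H]$ cosets modulo $H$ correspond to the $H$-orbits $O_1,\ldots,O_r$ on $\mathcal{S}$, each of size $k/r$. Setting
\[
W_j \;=\; \bigoplus_{V_i\in O_j} V_i \qquad (1\leq j\leq r),
\]
the equality $V=V_1\oplus\cdots\oplus V_k$ gives $V=W_1\oplus\cdots\oplus W_r$ immediately.

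It remains to check that $A$ permutes $\{W_1,\ldots,W_r\}$ under its natural action. For $a\in A$, one has $a W_j=\bigoplus_{V_i\in O_j}aV_i$; the set $\{aV_i:V_i\in O_j\}$ is the image of $O_j$ under $a$, and I would verify it is again an $H$-orbit using normality of $H$: if $V_{i'}=hV_i$ with $h\in H$, then $aV_{i'}=(aha^{-1})aV_i$ with $aha^{-1}\in H$, so $aV_i$ and $aV_{i'}$ lie in a common $H$-orbit. Hence $aW_j=W_{j'}$ for some $j'$, and $\{W_1,\ldots,W_r\}$ is the desired system of imprimitivity with $r$ components.

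I do not anticipate any real obstacle; the only point that requires minor care is the transition from block-systems for the permutation action of $A$ on $\mathcal{S}$ to genuine systems of imprimitivity in the linear sense, which is handled by the orbit-sum construction above.
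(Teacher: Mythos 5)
Your proposal is correct and follows essentially the same route as the paper: the paper takes the orbits of $\langle h^r\rangle$ (the index-$r$ subgroup of $\phi(A)$, i.e.\ the image of your $H$) on $\mathcal{S}$ and sums the components in each orbit to get the $r$ blocks. You merely spell out the details the paper leaves implicit (equal orbit sizes via normality/regularity and the check that $A$ permutes the orbit sums).
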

\begin{proof}
 Set $k/r=l$ and $\phi(A) = \langle h\rangle$.  
 Then $\mathcal{S}$ is a disjoint union of $r$ 
 $\langle h^r \rangle$-orbits 
 $\{ V_{i_1} , \ldots \, \! , V_{i_l}\}$ of length $l$, 
 and $\{ V_{i_1} \oplus \cdots \oplus V_{i_l} 
 \mid 1 \leq i \leq r \}$ 
 is an $A$-system of imprimitivity.
\end{proof}

 By Lemma~\ref{sizeprim} we can consider that 
 $A$ has an imprimitivity system of prime size.
\begin{proposition}
\label{likesim}
 Let $k$ be prime, and denote $\ker \phi$ by $A'$.
\begin{itemize}
\item[{\rm (i)}]  Each $V_i$   
 is a faithful irreducible $A'$-module.
\item[{\rm (ii)}]  
 $|A|$ divides $k(q^{n/k}-1)$.
\end{itemize}
\end{proposition}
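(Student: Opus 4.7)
The plan is to derive everything from the fact that $A$ is abelian together with transitivity on the components. Write $A/A'$ $\cong$ $\langle h \rangle$ of prime order $k$, fix generators $b_1 = 1, b_2, \ldots, b_k$ of the cosets of $A'$ so that $V_1 b_j = V_j$, and recall that all components have common dimension $n/k$.

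For (i), I would first establish faithfulness. Suppose $a \in A'$ acts trivially on some $V_i$. For any $j$ and any $v \in V_j$, write $v = wb_j$ with $w \in V_i$ (using an appropriate $b_j$ sending $V_i$ to $V_j$); since $A$ is abelian, $va = wb_j a = wab_j = wb_j = v$, so $a$ acts trivially on $V$ and thus $a = 1$. Next, for irreducibility, suppose $W \subsetneq V_1$ is a nonzero $A'$-submodule and set $W_j := Wb_j$. The key subtlety is well-definedness: if $b_j$ is replaced by $a'b_j$ for $a' \in A'$, then $Wa'b_j = Wb_j$ because $W$ is $A'$-stable; so each $W_j$ is an intrinsic subspace of $V_j$, nonzero and proper. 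Moreover, for any $a \in A$, the coset equation $b_j a = a'' b_{j'}$ (where $V_j a = V_{j'}$, $a'' \in A'$) gives $W_j a = W b_j a = W a'' b_{j'} = W b_{j'} = W_{j'}$, so $\bigoplus_j W_j$ is a proper nonzero $A$-invariant subspace of $V$. This contradicts the irreducibility of $A$, proving that each $V_i$ is $A'$-irreducible.

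For (ii), $A'$ is an abelian irreducible subgroup of $\mathrm{GL}(V_i) \cong \mathrm{GL}(n/k,\mathbb{F})$ by part (i). Proposition~\ref{singdef} then forces $|A'|$ to divide $q^{n/k}-1$. Since $\phi(A)$ is cyclic of order $k$, $|A| = k\,|A'|$ divides $k(q^{n/k}-1)$.

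The main obstacle is the irreducibility argument in (i): one must check that the candidate subspaces $W_j$ are independent of the choice of coset representative and that their direct sum is genuinely $A$-invariant. Both points hinge on the observation that $A'$ is normal in $A$ and that $W$ is $A'$-stable, so the argument is really just careful bookkeeping with cosets — once that is written out cleanly, the rest falls out immediately from Proposition~\ref{singdef}.
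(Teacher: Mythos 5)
Your proof is correct, and it follows the same overall skeleton as the paper's: faithfulness via commutativity plus transitivity of $A$ on the components, and the order bound in (ii) via Proposition~\ref{singdef} applied to the faithful irreducible abelian group $A'|_{V_i}$ together with $|A:A'|=k$. The one genuine difference is in the irreducibility part of (i). The paper quotes the standard block-stabiliser result from \cite[Theorem 4.2B, p.68]{Dixon} to get that each stabiliser $A_i$ acts irreducibly on $V_i$, and then identifies $A_i$ with $A'$ using the primality of $|A:A'|$ and the irreducibility of $A$; you instead prove irreducibility of $V_i$ as an $A'$-module from first principles, translating a putative proper nonzero $A'$-submodule $W\leq V_1$ around by coset representatives and checking that $\bigoplus_j Wb_j$ is a well-defined proper nonzero $A$-invariant subspace. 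Your well-definedness and invariance bookkeeping is exactly the hidden content of the cited stabiliser lemma, so nothing is missing. What your route buys is self-containedness and slightly more generality: it never uses that $k$ is prime, only that $A'=\ker\phi$ stabilises every component and that the (regular) action of $\phi(A)$ matches cosets of $A'$ bijectively with components; what the paper's route buys is brevity, at the cost of an external citation and an extra (easy) identification $A_i=A'$.
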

\begin{proof}
 The stabiliser $A_i$ $=$
 $\{ a\in A \mid V_i a =V_i \}$ acts irreducibly
 on $V_i$, $1\leq i \leq k$  
 (by \mbox{e.g.} \cite[Theorem 4.2B, p.68]{Dixon}).
 We have $A'= \cap_i A_i \leq A_i$, and because $|A : A'|$ is prime
 and $A$ is irreducible, $A'=A_i$ for all $i$. 
 If $a \in A$ fixes $V_i$ elementwise then by transitivity
 of $A$ on $\mathcal{S}$, $a$ fixes every element of
 $V$. That is, $V_i$ is a faithful $A_i$-module. Hence (ii) 
 follows by Proposition~\ref{singdef}.   
\end{proof}

 The converse of Proposition~\ref{likesim} (ii) is also true
 (see \cite[2.6]{Sim}).
\begin{proposition}
\label{likersim}
 A subgroup $C$ of  $\Delta^{\! \times}$  is 
 imprimitive if and only if $|C|$ divides 
 $k(q^{n/k}-1)$ for some prime $k$ dividing  $n$.
\end{proposition}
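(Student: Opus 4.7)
The plan is to handle the two directions separately, noting that the forward implication is essentially immediate. If $C$ is irreducible and imprimitive, then Lemma~\ref{sizeprim} lets us refine any imprimitivity system to one whose size is a prime $k$ dividing $n$, and Proposition~\ref{likesim}(ii) then gives $|C|\mid k(q^{n/k}-1)$. So the real content is the converse.

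For the converse I would proceed constructively. Assume $C\le \Delta^{\! \times}$ is irreducible with $|C|$ dividing $k(q^{n/k}-1)$ for a prime $k\mid n$. Identifying $V$ with $\Delta$ as a one-dimensional $\Delta$-module, $C$ acts by multiplication. Let $\mathbb{F}'\subseteq \Delta$ be the unique intermediate field of index $k$, and set $C_0:=C\cap \mathbb{F}'^{\times}$. Since $C$ is cyclic (as a subgroup of the cyclic group $\Delta^{\! \times}$) and irreducibility of $C$ forces $|C|\nmid q^{n/k}-1$ by Proposition~\ref{charirredab}, $C_0$ is proper in $C$; as $[C:C_0]$ divides the prime $k$, it equals $k$, and in particular $\beta^k\in C_0$ for any $\beta\in C\setminus C_0$.

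I would then pick such a $\beta$ and build the imprimitivity decomposition from its powers. Because $k$ is prime and $\beta\notin \mathbb{F}'$, we have $\mathbb{F}'(\beta)=\Delta$, so $\{1,\beta,\ldots,\beta^{k-1}\}$ is an $\mathbb{F}'$-basis of $V$. Setting $V_i:=\mathbb{F}'\beta^{i-1}$ for $1\le i\le k$ gives an $\mathbb{F}$-decomposition $V=V_1\oplus\cdots\oplus V_k$ into subspaces of dimension $n/k$. Elements of $C_0\le \mathbb{F}'^{\times}$ fix each $V_i$ setwise (acting as $\mathbb{F}'$-scalar multiplication), while multiplication by $\beta$ permutes the $V_i$ cyclically: $\beta V_i=V_{i+1}$ for $i<k$, and $\beta V_k=\mathbb{F}'\beta^k=\mathbb{F}'=V_1$ since $\beta^k\in\mathbb{F}'^{\times}$. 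Because $C=\langle C_0,\beta\rangle$, the collection $\{V_1,\ldots,V_k\}$ is an imprimitivity system for $C$, so $C$ is imprimitive.

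The main obstacle is recognising the correct decomposition. The key point is that any $\beta\in C\setminus C_0$ automatically generates $\Delta$ as a field over $\mathbb{F}'$ (because $k$ is prime) and simultaneously satisfies $\beta^k\in \mathbb{F}'$ (because $[C:C_0]=k$). These two features together force the power basis $1,\beta,\ldots,\beta^{k-1}$ to be cyclically permuted by $\beta$ and fixed setwise by $C_0$, yielding the imprimitivity system with essentially no further calculation.
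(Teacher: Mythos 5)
Your proof is correct, but it is worth noting that the paper does not actually prove this proposition at all: it simply remarks that the converse of Proposition~\ref{likesim}(ii) holds and cites \cite[2.6]{Sim}. Your forward direction (refine a system of imprimitivity to prime size via Lemma~\ref{sizeprim}, then apply Proposition~\ref{likesim}(ii)) is exactly the reduction the paper itself sets up before Proposition~\ref{likesim}, so that part is in the spirit of the text. The converse is where you add genuine content: identifying $V$ with $\Delta$ as a one-dimensional $\Delta$-module, taking the intermediate field $\mathbb{F}'=\mathrm{GF}(q^{n/k})$ with $[\Delta:\mathbb{F}']=k$, setting $C_0=C\cap\mathbb{F}'^{\times}$, and exhibiting the imprimitivity system $\{\mathbb{F}'\beta^{i-1}\}_{i=1}^{k}$ built from the power basis of a coset representative $\beta$. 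This gives a short, self-contained, explicitly constructive replacement for the external reference, which is a real gain for a reader of this paper. One small step you assert rather than justify is that $[C:C_0]$ divides $k$; it deserves a line, e.g. since $|C|$ divides $k(q^{n/k}-1)$, for any $\beta\in C$ we have $(\beta^{k})^{q^{n/k}-1}=1$, so $\beta^{k}$ lies in the unique subgroup of $\Delta^{\! \times}$ of order $q^{n/k}-1$, namely $\mathbb{F}'^{\times}$, whence $\beta^{k}\in C_0$ and the cyclic quotient $C/C_0$ has exponent, hence order, dividing $k$. With that line inserted, irreducibility (via Proposition~\ref{charirredab}) gives $C_0\neq C$ and so $[C:C_0]=k$, and the rest of your argument goes through as written.
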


\section{Fundamental structural results}
\label{fund}

 We frequently use the following two lemmas, 
 whose straightforward proofs are omitted.
\begin{lemma}
\label{abnorsgp}
 A finite abelian normal subgroup of a primitive
 linear group 
 {\em (}over an arbitrary field\hspace*{.7mm}{\em )} 
 is cyclic.
\end{lemma}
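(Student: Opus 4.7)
The plan is to combine Clifford theory with the fact that every finite subgroup of the multiplicative group of a field is cyclic. Let $A$ be the finite abelian normal subgroup of the primitive group $G \leq \mathrm{GL}(V)$, let $F$ be the ground field, and assume $V$ is finite dimensional (consistent with the paper's standing setup).

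First I would dispose of the positive-characteristic case in which $p := \mathrm{char}(F)$ divides $|A|$. The Sylow $p$-subgroup $A_p$ is characteristic in $A$, hence normal in $G$, and its elements are unipotent since they have $p$-power order. A finite $p$-group of unipotent matrices in characteristic $p$ always has a non-zero fixed subspace (any minimal $FA_p$-submodule is trivial, because the only simple $FA_p$-module in characteristic $p$ is the trivial one), so $V^{A_p} \neq 0$; this subspace is $G$-invariant by normality of $A_p$, so irreducibility of $G$ forces $V^{A_p} = V$, hence $A_p = 1$. Thus $|A|$ is coprime to $\mathrm{char}(F)$ in all cases.

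Next, $FA$ is semisimple by Maschke's theorem, so $V$ decomposes as a direct sum $V_1 \oplus \cdots \oplus V_k$ of $FA$-isotypic components. Because $A$ is normal in $G$, conjugation by each $g \in G$ twists the $FA$-module structure on any submodule and permutes the isomorphism classes of the simple $FA$-modules occurring in $V$; hence $G$ permutes the $V_i$. This is a $G$-system of imprimitivity, and primitivity forces $k = 1$, so $V \cong W^m$ as an $FA$-module for some simple $FA$-module $W$. Faithfulness of $A$ on $V$ then yields faithfulness of $A$ on $W$.

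Finally, since $A$ is abelian, the image $R$ of $FA$ in $\mathrm{End}_F(W)$ is a commutative finite-dimensional $F$-algebra acting faithfully and irreducibly on $W$. Viewing $W$ as a simple $R$-module, $W \cong R/\mathfrak{m}$ for some maximal ideal $\mathfrak{m}$, and faithfulness forces $\mathfrak{m} = 0$, so $R$ is a field. Then $A$ embeds in $R^{\times}$, and being finite it is cyclic. The central move is the Clifford-theoretic reduction to a single isotypic component; the step I would expect to be slightly delicate to state cleanly is the positive-characteristic fixed-point argument, but it is routine once the Sylow $p$-subgroup has been isolated.
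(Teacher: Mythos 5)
Your proof is correct, and since the paper explicitly omits the proof of this lemma as ``straightforward,'' your argument is precisely the standard one it presupposes: Clifford's theorem forces the finite abelian normal subgroup to act homogeneously (primitivity kills any nontrivial permutation of isotypic components), its enveloping algebra on a simple constituent is then a field, and a finite subgroup of the multiplicative group of a field is cyclic. Your preliminary reduction eliminating the Sylow $p$-subgroup in characteristic $p$ via the unipotent fixed-space argument is also handled correctly, so nothing is missing.
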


\begin{lemma}
\label{ind2prim} 
 Any subgroup of index $2$ of a primitive linear group 
 is irreducible.
\end{lemma}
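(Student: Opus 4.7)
My plan is to argue by contradiction, exploiting the fact that a subgroup of index $2$ is automatically normal so that Clifford-type reasoning applies.

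Suppose $H\le G$ has index $2$ and $H$ is reducible. Since $\mathbb{F}$ is finite, $H$ acts completely reducibly is not something I need; I only need one irreducible $H$-submodule $W$ of $V$, which exists because $V$ is finite-dimensional and $H$-reducibility means there is a proper nonzero $H$-submodule, and inside it I may pick a minimal one. Now pick any $g\in G\setminus H$. Because $[G:H]=2$, $H$ is normal in $G$, so conjugation by $g$ permutes $H$-submodules of $V$; in particular $Wg$ is again an irreducible $H$-submodule of $V$.

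Next I consider the $\mathbb{F}H$-module $W\cap Wg$. Being the intersection of two $H$-submodules, it is itself an $H$-submodule of $W$; since $W$ is $H$-irreducible, either $W\cap Wg=0$ or $W\cap Wg=W$, i.e.\ $W=Wg$ (the last equality following on comparing dimensions, since $W$ and $Wg$ have the same $\mathbb{F}$-dimension). The sum $W+Wg$ is stable under $H$ and under $g$, hence under all of $G=H\cup Hg$, so by irreducibility of $G$ one has $W+Wg=V$. Therefore exactly one of the following holds: (a) $W=Wg$, in which case $W$ is $G$-invariant, contradicting irreducibility of $G$; or (b) $V=W\oplus Wg$, in which case $\{W,Wg\}$ is a $G$-system of imprimitivity of size $2$, contradicting primitivity of $G$.

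Either alternative yields a contradiction, so $H$ must be irreducible. The whole argument is short and the only step that might look like an obstacle is ensuring the dichotomy $W\cap Wg\in\{0,W\}$, but this is immediate from the $H$-irreducibility of $W$ once one observes that $Wg$ is $H$-invariant, which in turn uses only the normality coming from index $2$. No finiteness of $\mathbb{F}$ is needed, matching the fact that the companion Lemma~\ref{abnorsgp} is stated over an arbitrary field.
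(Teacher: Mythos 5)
Your proof is correct; the paper omits its own proof of this lemma as ``straightforward'', and your Clifford-style argument (pass to an irreducible $H$-submodule $W$, conjugate by $g\in G\setminus H$, and note that the dichotomy $W=Wg$ versus $V=W\oplus Wg$ contradicts irreducibility or primitivity respectively) is exactly the intended standard one. The only detail left implicit is that the $g$-stability of $W+Wg$ and the swap $Wg\cdot g=W$ rest on $g^2\in H$, which is immediate from the index being $2$.
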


\begin{notation}
 If $H$ is nilpotent then we denote its Sylow $2$-subgroup 
 by $H_2$.
\end{notation}

 The following theorem 
 is vital.
\begin{theorem}
\label{konzal}
 Suppose $G$ is nilpotent primitive.  
\begin{itemize}
\item[{\rm (i)}] Every odd order subgroup of $G$ is cyclic.
\item[{\rm (ii)}] If $G_2$ is nonabelian then 
 $q\equiv 3$ mod $4$, and $G_2$ has maximal nilpotency 
 class: either $G_2 \cong Q_8$,
 or $G_2$ has order at least $16$ 
 and is generalised quaternion, 
 dihedral, or semidihedral. 
\end{itemize}
\end{theorem}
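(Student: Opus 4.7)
The plan is to reduce both parts to a classical structural theorem on finite $p$-groups (see e.g.\ Huppert, \emph{Endliche Gruppen I}, III.7.6--7.8): a finite $p$-group in which every abelian normal subgroup is cyclic must be cyclic if $p$ is odd, and must be cyclic, generalised quaternion, dihedral of order $\geq 16$, or semidihedral if $p = 2$. Because $G$ is nilpotent, $G$ is the internal direct product of its Sylow subgroups $G_p$, so every normal subgroup of $G_p$ is already normal in $G$. Lemma \ref{abnorsgp} thus implies that every abelian normal subgroup of each $G_p$ is cyclic, which is precisely the hypothesis of the classical theorem.

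Part (i) now follows at once: each $G_p$ with $p$ odd is cyclic, the odd part of $G$ is a direct product of cyclic groups of coprime orders, hence cyclic, and every odd-order subgroup of $G$ lies inside this cyclic odd part. For (ii), the classical theorem (together with $G_2$ nonabelian) immediately yields the list of possible isomorphism types for $G_2$. That $p \neq 2$ follows from a separate irreducibility argument: in characteristic $2$, $G_2$ would be a nontrivial normal $2$-subgroup of the irreducible $G$, so the standard fact that any finite $p$-group acting on a nonzero space in characteristic $p$ has nonzero fixed points supplies a proper nontrivial $G$-invariant subspace $V^{G_2}$, contradicting irreducibility.

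It remains to show $q \equiv 3 \pmod 4$. Let $C$ be the cyclic subgroup of index $2$ in $G_2$ (present in every nonabelian maximal class $2$-group) and pick $b \in G_2 \setminus C$. Then $H := C \cdot G_{\mathrm{odd}}$ has index $2$ in $G$, hence is irreducible by Lemma \ref{ind2prim}; it is abelian because $C$ commutes with the cyclic $G_{\mathrm{odd}}$. Proposition \ref{singdef} gives that $\Delta := \langle H \rangle_{\mathbb{F}}$ is a degree-$n$ field extension of $\mathbb{F}$ with $H \leq \Delta^{\!\times}$. Since $b$ normalises $H$, Proposition \ref{normcentr} lets us write $b = d\sigma^k$ for some $d \in \Delta^{\!\times}$ and some $k$, where $\sigma$ generates $\mathrm{Gal}(\Delta/\mathbb{F})$; conjugation by $b$ therefore acts on $H$ as the Frobenius $x \mapsto x^{q^k}$. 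Inspection of the defining relations of $Q_{2^m}$, $D_{2^m}$, and $SD_{2^m}$ shows that $b$ inverts the unique order-$4$ subgroup of $C$ in each case---the only slightly fiddly verification is the semidihedral one, where $-1 + 2^{m-2} \equiv -1 \pmod 4$ for $m \geq 4$. Hence $q^k \equiv -1 \pmod 4$, which (since $q$ is odd) forces $q \equiv 3 \pmod 4$. The main obstacle is simply locating and correctly applying the classical $p$-group theorem; once that is in hand, the Galois-theoretic arithmetic in $\Delta$ via Proposition \ref{normcentr} is routine.
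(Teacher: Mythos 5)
Your argument is correct, but it is genuinely different in character from what the paper does: the paper simply cites Zalesskii and Konyukh, remarking only that everything flows from the fact that $[G,G]$ is cyclic for a nilpotent irreducible group, whereas you give a self-contained proof. Your route runs through the classical classification of $p$-groups all of whose abelian normal subgroups are cyclic, fed by Lemma~\ref{abnorsgp} (together with the observation that, $G$ being the direct product of its Sylow subgroups, normal subgroups of $G_p$ are normal in $G$); this cleanly yields (i) and the list in (ii), with $D_8$ correctly excluded since its Klein four subgroups are normal. Your derivation of $q\equiv 3 \bmod 4$ is also sound and, pleasingly, uses only the paper's own toolkit: $p\neq 2$ via the fixed-point argument for the normal $2$-subgroup $G_2$, then the index-$2$ abelian subgroup $H=C\cdot G_{\mathrm{odd}}$ is irreducible by Lemma~\ref{ind2prim}, its enveloping algebra is a field by Proposition~\ref{singdef}, and Proposition~\ref{normcentr} forces conjugation by $b$ to act on $H$ as $x\mapsto x^{q^k}$; since $b$ inverts the order-$4$ subgroup of $C$ in all three maximal-class families (the semidihedral exponent $-1+2^{m-2}$ is $\equiv -1 \bmod 4$ for $m\geq 4$), one gets $q^k\equiv -1 \bmod 4$ and hence $q\equiv 3 \bmod 4$. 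What the paper's citation buys is brevity and an organizing principle ($[G,G]$ cyclic, as exploited in the cited works); what your proof buys is independence from \cite{Zalesskii} and \cite{Konyuh}, at the modest cost of invoking the classical $p$-group theorem (your Huppert reference is in the right section, though you should check the exact Satz number before relying on it).
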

\begin{proof}
 See \cite{Zalesskii}
 or \cite{Konyuh}. 
 Everything  derives from the fact that $[G,G]$ is cyclic.
\end{proof}
 
 Thus nilpotent primitive $G$ has the form $G_2 \times C$, 
 where the cyclic group $C$ is the direct product of the odd 
 order Sylow subgroups of $G$, and $G$ has a cyclic subgroup 
 of index 2. 

 A nilpotent primitive linear group over a finite field is 
 metacyclic (normalises a Singer cycle). Sim \cite{Sim} 
 classifies all metacyclic primitive linear groups of odd 
 prime power degree over finite fields. That case is not 
 relevant to our purposes, by the next proposition. 
\begin{proposition}
\label{oddfact}
 If $G$ is a  nonabelian nilpotent primitive subgroup
 of ${\rm GL}(n , \mathbb{F})$ then $n$ is even.
\end{proposition}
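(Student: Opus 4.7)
My plan is to obtain a contradiction by assuming $n$ is odd and extracting a divisibility obstruction from the structure theorem in Theorem~\ref{konzal}. The basic strategy is that the presence of a nonabelian Sylow $2$-subgroup forces an element of order $4$ into a cyclic irreducible subgroup, whose order is controlled by $q^n-1$.

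First, I would observe that because $G$ is nonabelian while every odd order subgroup of $G$ is cyclic by Theorem~\ref{konzal}(i), the Sylow $2$-subgroup $G_2$ must be nonabelian (otherwise $G = G_2 \times C$ with both factors abelian would be abelian). Applying Theorem~\ref{konzal}(ii) then yields two key pieces of data: $q \equiv 3 \pmod 4$, and $G_2$ has maximal nilpotency class with $|G_2| \geq 8$, so $G_2$ contains a cyclic subgroup of index $2$ of order $\geq 4$.

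Next I would invoke the sentence following Theorem~\ref{konzal}: $G$ has a cyclic subgroup $H$ of index $2$. Since $H$ has index $2$ in the primitive group $G$, Lemma~\ref{ind2prim} makes $H$ irreducible, and being cyclic it is abelian irreducible, so Proposition~\ref{singdef} gives $|H| \mathrel{\big|} q^n - 1$. Writing $G = G_2 \times C$ with $C$ cyclic of odd order, one has $C \leq H$ and $H = C \times (H \cap G_2)$; since $H$ is cyclic and $H \cap G_2$ has index $2$ in $G_2$, this forces $H \cap G_2$ to be the cyclic index-$2$ subgroup of $G_2$, hence of order at least $4$. Therefore $4 \mathrel{\big|} |H|$ and in particular $4 \mathrel{\big|} q^n - 1$.

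Finally, I would close the argument by a parity computation: with $q \equiv 3 \pmod 4$, if $n$ is odd then $q^n \equiv 3 \pmod 4$, so $q^n - 1 \equiv 2 \pmod 4$, contradicting $4 \mathrel{\big|} q^n - 1$. The only part needing any care is the step identifying $H \cap G_2$ as the cyclic index-$2$ subgroup of $G_2$ (and hence of order $\geq 4$); everything else is a direct assembly of Theorem~\ref{konzal}, Lemma~\ref{ind2prim}, and Proposition~\ref{singdef}.
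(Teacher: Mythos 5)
Your proposal is correct and follows essentially the same route as the paper: take the cyclic index-$2$ subgroup $H$ of $G$ supplied by Theorem~\ref{konzal}, note its order is divisible by $4$, apply Lemma~\ref{ind2prim} and Proposition~\ref{singdef} to get $|H| \mid q^n-1$, and contradict this with $q \equiv 3 \bmod 4$ and $n$ odd. The only difference is that you spell out via $H = (H \cap G_2) \times C$ why $4$ divides $|H|$, a detail the paper leaves implicit.
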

\begin{proof}
 By Theorem~\ref{konzal}, $q\equiv 3$ mod 4
 and $G$ has a cyclic normal subgroup $H$ 
 of index 2 and order divisible by 4. 
 By Lemma~\ref{ind2prim} and Proposition~\ref{singdef}, 
 $|H|$ divides $q^n-1$, whereas if $n$ is odd 
 then $q^n-1$ is not divisible by 4.
\end{proof}

\section{Degree 2}
\label{deg2}

\begin{proposition}
\label{absirredcase1}
 Suppose $G$ is nilpotent primitive. 
 Then $G$ is absolutely irreducible if and only 
 if $n=2$ and $G$ is nonabelian. 
\end{proposition}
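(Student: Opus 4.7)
The plan is to argue both directions using the structural facts already recorded in Section~\ref{fund} together with the dimension count for enveloping algebras.

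For the forward direction, assume $G$ is absolutely irreducible, so $\langle G\rangle_{\mathbb{F}} = \mathrm{M}_n(\mathbb{F})$, which has $\mathbb{F}$-dimension $n^2$. First I would rule out the abelian case: if $G$ were abelian, Proposition~\ref{singdef} would force $\langle G\rangle_{\mathbb{F}}$ to be a field of $\mathbb{F}$-dimension $n$, incompatible with $\mathrm{M}_n(\mathbb{F})$ for $n>1$. So $G$ must be nonabelian, and then by Proposition~\ref{oddfact} $n$ is even. Next I would invoke the structural consequence of Theorem~\ref{konzal} recorded after its proof: $G$ possesses a cyclic subgroup $H$ of index $2$. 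By Lemma~\ref{ind2prim} $H$ is irreducible, so Proposition~\ref{singdef} gives $\dim_{\mathbb{F}} \langle H\rangle_{\mathbb{F}} = n$. Picking any $g\in G\setminus H$, we get $\langle G\rangle_{\mathbb{F}} = \langle H\rangle_{\mathbb{F}} + \langle H\rangle_{\mathbb{F}}\, g$, so $\dim_{\mathbb{F}} \langle G\rangle_{\mathbb{F}} \leq 2n$. Absolute irreducibility then forces $n^2\leq 2n$, i.e.\ $n\leq 2$, and combined with $n>1$ we conclude $n=2$.

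For the converse, suppose $n=2$ and $G$ is nonabelian. Since primitive means irreducible, the enveloping algebra $\Delta=\langle G\rangle_{\mathbb{F}}$ acts faithfully and irreducibly on $V=\mathbb{F}^2$, hence $\Delta$ is a simple $\mathbb{F}$-algebra. By Wedderburn, $\Delta\cong \mathrm{M}_k(D)$ for a finite (hence commutative) division algebra $D$ over $\mathbb{F}$, and dimension-counting on the simple module gives $n = k\,[D:\mathbb{F}] = 2$. Either $k=1$, in which case $\Delta$ is commutative and $G\subseteq \Delta^{\times}$ is abelian — contradicting our hypothesis — or $k=2$ and $D=\mathbb{F}$, whence $\Delta = \mathrm{M}_2(\mathbb{F})$ and $G$ is absolutely irreducible.

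There is no real obstacle: the forward direction is a clean dimension count riding on the index-$2$ cyclic subgroup provided by Theorem~\ref{konzal}, and the converse is the standard Wedderburn dichotomy for irreducible degree-two representations. The only point requiring care is ensuring the subgroup $H$ used in the forward direction is genuinely irreducible, which is exactly what Lemma~\ref{ind2prim} supplies.
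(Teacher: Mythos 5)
Your proof is correct, but it follows a genuinely different route from the paper's. For the forward direction the paper lifts $G$ to characteristic zero (using that every element of $G$ is semisimple, so $p\nmid|G|$, hence $G$ is isomorphic to an irreducible subgroup of $\mathrm{GL}(n,\mathbb{C})$) and then applies Ito's theorem to the abelian index-$2$ subgroup supplied by Theorem~\ref{konzal} to conclude $n\mid 2$; you instead stay inside $\mathrm{Mat}(n,\mathbb{F})$ and make the elementary count $\dim_{\mathbb{F}}\langle G\rangle_{\mathbb{F}}\leq 2n$ from $\langle G\rangle_{\mathbb{F}}=\langle H\rangle_{\mathbb{F}}+\langle H\rangle_{\mathbb{F}}\,g$, where the irreducibility of $H$ (Lemma~\ref{ind2prim}) and Proposition~\ref{singdef} give $\dim_{\mathbb{F}}\langle H\rangle_{\mathbb{F}}=n$, so that $n^{2}\leq 2n$ forces $n=2$. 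For the converse the paper argues via complete reducibility over the algebraic closure (again resting on $p\nmid|G|$ from the Suprunenko citation), whereas you use the Wedderburn structure of the enveloping algebra together with Wedderburn's little theorem; your version in fact needs neither nilpotency nor primitivity, only that a nonabelian irreducible subgroup of $\mathrm{GL}(2,\mathbb{F})$ has enveloping algebra $\mathrm{M}_{2}(\mathbb{F})$, so it is marginally more general and avoids the semisimplicity citation entirely. What the paper's route buys is brevity given the quoted results (Ito, Suprunenko); what yours buys is self-containedness, trading those citations for the standard equivalence between absolute irreducibility and $\langle G\rangle_{\mathbb{F}}=\mathrm{M}_{n}(\mathbb{F})$, which is consonant with the enveloping-algebra viewpoint used throughout Section~\ref{ablin}. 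The appeal to Proposition~\ref{oddfact} in your forward direction is superfluous (the inequality $n\leq 2$ already does the work), but harmless and noncircular.
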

\begin{proof} 
 Since every element of $G$ is semisimple  
 (\cite[Corollary 1, p.239]{Suprunenko}),
 $p$ does not divide $|G|$. 
 Thus $G$ is completely reducible over the algebraic 
 closure of $\mathbb{F}$, so 
 that if $n=2$ and $G$ is nonabelian then $G$ must be 
 absolutely  irreducible.
 Conversely, if $G$ is absolutely irreducible then it 
 is nonabelian, and isomorphic to an irreducible subgroup 
 of $\mathrm{GL}(n,\mathbb{C})$---see e.g. 
 \cite[Corollary 3.8, p.62]{Dixon}.
 By Theorem~\ref{konzal}, 
 $G$ has an abelian subgroup of index 2.  
 Therefore $n$ divides 2 by Ito's result 
 \cite[6.15, p.84]{Isaacs}.
\end{proof}

\begin{proposition}
\label{secchar}
 A nonabelian subgroup $G$ of
 $\mathrm{GL}(2,\mathbb{F})$ is nilpotent primitive 
 if and only if $q \equiv 3$ mod $4$ 
 and $G = G_2 \times C$, where $G_2$ is primitive
 and $C$ is a {\rm (}cyclic{\rm )} group of odd order 
 scalars.
\end{proposition}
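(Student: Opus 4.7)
The plan is to leverage Theorem~\ref{konzal} to obtain the direct product decomposition $G=G_2\times C$, then analyze the action of the central odd-order factor $C$ on $V$, the key point being that $C$ must consist of scalars.

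For the forward direction, suppose $G$ is nonabelian, nilpotent, and primitive in $\mathrm{GL}(2,\mathbb{F})$. Theorem~\ref{konzal} gives $G=G_2\times C$ with $C$ cyclic of odd order; since $G$ is nonabelian, $G_2$ is nonabelian, and Theorem~\ref{konzal}(ii) yields $q\equiv 3\pmod 4$. To show $C$ is scalar, let $c$ generate $C$; since $|c|$ is coprime to $p$, $c$ is semisimple. I would split into cases according to the $\mathbb{F}$-eigenstructure of $c$. If $c$ has no eigenvector over $\mathbb{F}$, then $\langle c\rangle$ is irreducible, so by Proposition~\ref{normcentr} its $\mathrm{GL}(2,\mathbb{F})$-centraliser is the abelian group $\Delta^{\!\times}$; since $G_2$ centralises $C$, this forces $G_2$ abelian, a contradiction. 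If $c$ has two distinct $\mathbb{F}$-eigenvalues, the two $1$-dimensional eigenspaces are preserved by everything that centralises $c$, hence by all of $G$, contradicting irreducibility of $G$. The only remaining possibility (using semisimplicity to rule out a single eigenvalue with a nontrivial Jordan block) is that $c$ is scalar, whence $C$ is scalar.

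Primitivity of $G_2$ then follows immediately: scalar matrices preserve every subspace and every direct sum decomposition, so any $G_2$-invariant subspace or nontrivial $G_2$-block system would also be $G$-stable, contradicting that $G$ is primitive.

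For the converse, suppose $G=G_2\times C$ is as described, with $G$ nonabelian. Then $G_2$ is nonabelian (else $G$ would be abelian), and $G$ is nilpotent as the product of a $2$-group and a cyclic group of odd order. Irreducibility of $G_2$ (from its primitivity) gives irreducibility of $G$, since scalars preserve every subspace. Any putative nontrivial $G$-block system would be permuted by $G_2$: a nontrivial action would contradict primitivity of $G_2$, while a trivial action (each block fixed setwise) would contradict irreducibility of $G_2$. Hence $G$ is primitive. The main obstacle is the scalar-ness of $C$ in the forward direction; this requires combining centrality of $C$ in $G$, the centraliser description of Proposition~\ref{normcentr}, and irreducibility of $G$, to rule out both the irreducible and the split-reducible possibilities for the generator $c$.
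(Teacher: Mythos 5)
Your proof is correct, but it reaches the two key conclusions in the opposite order to the paper and by different means. The paper first proves that $G_2$ is primitive: by Theorem~\ref{konzal}\,(ii) the nonabelian group $G_2$ is $Q_8$ or of maximal class of order at least $16$, whereas an imprimitive (equivalently, monomial, or diagonalisable) $2$-subgroup of $\mathrm{GL}(2,\mathbb{F})$ with $q\equiv 3$ mod $4$ lies in a Sylow $2$-subgroup of the monomial group, which is dihedral of order $8$; it then deduces that $G_2$ is absolutely irreducible via Proposition~\ref{absirredcase1} and gets scalarity of $C$ from Schur's Lemma. You instead prove scalarity of $C$ first, by an elementary degree-$2$ analysis of the central generator $c$: if $c$ has no eigenvector over $\mathbb{F}$ then $\langle c\rangle$ is irreducible and Proposition~\ref{normcentr} makes its centraliser abelian, forcing $G_2$ abelian; if $c$ has an $\mathbb{F}$-eigenvalue but is not scalar, an eigenspace is a proper nonzero $G$-invariant subspace; hence $c$ is scalar, and primitivity of $G_2$ then falls out for free because scalars respect every subspace and block system. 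Your route avoids both Proposition~\ref{absirredcase1} and the monomial Sylow computation, and you also spell out the (easy) converse, which the paper leaves unstated; the paper's route has the advantage of exhibiting absolute irreducibility of $G_2$, which it reuses later. One small point to tighten: you assert that $|c|$ is coprime to $p$ because it is odd, but $p$ is odd here; coprimality holds because every element of a nilpotent primitive group is semisimple (the fact cited in the proof of Proposition~\ref{absirredcase1}), or you can bypass semisimplicity entirely, since in the non-scalar case with an $\mathbb{F}$-eigenvalue the eigenspace argument already yields the contradiction whether or not there is a nontrivial Jordan block.
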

\begin{proof}
 Suppose $G$ is nilpotent primitive, so $q \equiv 3$ mod 4.
 Then a Sylow 2-subgroup of the full monomial subgroup 
 of $\mathrm{GL}(2,\mathbb{F})$ is dihedral of order 8. 
 By Theorem~\ref{konzal} (ii), $G_2$ must be primitive, 
 because it is not monomial (the only sort of imprimitivity 
 in prime degree). Furthermore, $G_2$ is absolutely 
 irreducible by Proposition~\ref{absirredcase1}. Schur's 
 Lemma implies $C$ is scalar. 
\end{proof}
\begin{remark}
 We emphasise that the isomorphism types for $G_2$ 
 in Proposition~\ref{secchar} may be written down 
 from Theorem~\ref{konzal} (ii); cf. also \cite[II.2]{LGP}.
\end{remark}
\begin{remark}
 By \cite[III.4 (ii)]{LGP}, when $q\equiv 3$ mod 4,
 a nilpotent primitive subgroup of $\mathrm{GL}(n,\mathbb{F})$
 can have primitive Sylow 2-subgroup only if $n=2$.
\end{remark}

 Below we say a little more about 2-subgroups of 
 $\mathrm{GL}(2,\mathbb{F})$. Prior to that we present
 some results on conjugacy between nilpotent linear groups.
\begin{theorem}
\label{isoeqconj}
 Suppose $n\neq p$ is prime, and 
 $G$, $H$ are nonabelian $n$-subgroups of
 $\mathrm{GL}(n,\mathbb{F})$. Then $G$, $H$ are conjugate 
 in $\mathrm{GL}(n,\mathbb{F})$ if and only if 
 $G \cong H$.
\end{theorem}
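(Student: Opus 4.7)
The forward direction is immediate, so I focus on showing $G \cong H$ implies $G, H$ are $\mathrm{GL}(n, \mathbb{F})$-conjugate. Let $\phi : G \to H$ be an isomorphism. I begin by observing that any nonabelian $n$-subgroup of $\mathrm{GL}(n,\mathbb{F})$ acts absolutely irreducibly. Indeed, since $n \neq p$, Maschke applies to $V \otimes \bar{\mathbb{F}}$; each absolutely irreducible constituent has degree a power of $n$ bounded by $n$, hence $1$ or $n$. An all-ones decomposition would force $G$ to act through an abelian quotient, contradicting nonabelianness. So $V \otimes \bar{\mathbb{F}}$ is a single degree-$n$ constituent, $V$ is absolutely irreducible over $\mathbb{F}$, and the same conclusion applies to $H$.

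Next I recast the problem character-theoretically. The representations $\rho_G$ and $\rho_H \circ \phi$ are faithful absolutely irreducible $\mathbb{F}G$-modules of dimension $n$, and Wedderburn's theorem on finite division rings forces the Schur index over $\mathbb{F}$ to be $1$. Hence two such modules of $G$ are isomorphic precisely when they carry the same character, and isomorphic modules give $\mathrm{GL}(n,\mathbb{F})$-conjugate images. The theorem therefore reduces to producing $\alpha \in \mathrm{Aut}(G)$ with $\chi_{\rho_H} \circ \phi \circ \alpha = \chi_{\rho_G}$, i.e., to showing that $\mathrm{Aut}(G)$ acts transitively on the set of $\mathbb{F}$-valued faithful absolutely irreducible characters of $G$ of degree $n$.

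This transitivity is the main obstacle. For $n = 2$ I would invoke Theorem~\ref{konzal} together with a short classification of imprimitive nonabelian $2$-subgroups of $\mathrm{GL}(2, \mathbb{F})$: each such $G$ admits a cyclic subgroup $G_0$ of index $2$. The faithful absolutely irreducible $\mathbb{F}$-characters of $G$ of degree $2$ are induced from faithful linear characters of $G_0$, paired under inversion, and $\mathrm{Aut}(G_0) \cong (\mathbb{Z}/|G_0|)^\times$ acts transitively on such pairs by changing generators of $G_0$. The crucial point is that the natural map $\mathrm{Aut}(G) \to \mathrm{Aut}(G_0)$ is surjective for each isomorphism type arising (the groups listed in Theorem~\ref{konzal} (ii) and their imprimitive counterparts), which is a routine verification from the standard presentations; the case $G \cong Q_8$ is handled separately since $Q_8$ has a unique faithful absolutely irreducible character of degree $2$ and the orbit question is trivial. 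Surjectivity yields the required transitivity and completes the argument.
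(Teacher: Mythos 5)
Your reduction is fine as far as it goes: the absolute irreducibility argument is correct, the Schur index over a finite field is trivial, distinct absolutely irreducible characters are linearly independent in any characteristic, so the theorem does reduce to showing that $\mathrm{Aut}(G)$ acts transitively on the faithful absolutely irreducible degree-$n$ characters realizable over $\mathbb{F}$. But that transitivity is exactly the hard content, and your proposal does not establish it. Two concrete gaps. First, the statement is for every prime $n \neq p$, and you argue only for $n=2$; for odd $n$ dividing $q-1$ there are nonabelian $n$-subgroups of $\mathrm{GL}(n,\mathbb{F})$ (extraspecial groups, subgroups of $C_{n^a} \wr C_n$, etc.) for which you give no argument at all. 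Second, even at $n=2$ your structural claim that every imprimitive nonabelian $2$-subgroup of $\mathrm{GL}(2,\mathbb{F})$ has a cyclic subgroup of index $2$ is false when $q \equiv 1$ mod $4$: the monomial group $C_4 \wr C_2$ (diagonal matrices of order dividing $4$ extended by the swap) is a nonabelian absolutely irreducible $2$-subgroup of order $32$ whose largest cyclic subgroups have order $8$. For such groups your "induce from a faithful linear character of a cyclic maximal subgroup and lift automorphisms" mechanism simply does not apply, and Theorem~\ref{konzal} cannot rescue it, since that theorem concerns nilpotent \emph{primitive} groups and forces $q \equiv 3$ mod $4$, so it says nothing about the imprimitive groups that arise when $q \equiv 1$ mod $4$.

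The paper avoids all of this by quoting Conlon's theorem: any nonabelian $n$-subgroup of $\mathrm{GL}(n,\mathbb{F})$ acts absolutely irreducibly in degree the prime $n$, hence its faithful irreducible representation is induced from a linear character of a subgroup of index $n$, which is normal (smallest prime index) and therefore abelian, and the centre is cyclic by Schur's Lemma; Conlon's result for $p$-groups with an abelian maximal subgroup and cyclic centre then gives conjugacy of isomorphic copies over $\overline{\mathbb{F}}$, and the Deuring--Noether theorem descends this to conjugacy over $\mathbb{F}$. In your framework, Conlon's theorem is precisely the missing transitivity statement (valid for all the groups above, including $C_4 \wr C_2$ and the odd-prime cases), so either you invoke such a result or you must prove it, which your sketch does not do.
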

\begin{proof}
 If $G \cong H$ then by \cite[Proposition 4.2]{Conlon},
 $G$ and $H$ are conjugate as subgroups of 
 $\mathrm{GL}(n,\overline{\mathbb{F}})$, where
 $\overline{\mathbb{F}}$ is the algebraic closure of
 $\mathbb{F}$. 
 Thus $G$ and $H$ are conjugate in $\mathrm{GL}(n,\mathbb{F})$ 
 by the Deuring-Noether Theorem 
 \cite[1.22, p.26]{HuppertBlackburn}.
\end{proof} 
\begin{corollary}
\label{conl}
 Two nilpotent primitive subgroups of 
 $\mathrm{GL}(2,\mathbb{F})$ are
 conjugate in $\mathrm{GL}(2,\mathbb{F})$ if and only
 if they are isomorphic.
\end{corollary}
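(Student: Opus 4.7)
The forward implication is trivial, so the task is to show that isomorphic nilpotent primitive subgroups $G, H \leq \mathrm{GL}(2,\mathbb{F})$ are conjugate. The plan is to split according to whether $G$ is abelian, using Proposition~\ref{conjab} in the easy case and reducing the nonabelian case to Theorem~\ref{isoeqconj} via the canonical decomposition provided by Proposition~\ref{secchar}.

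First, if $G$ is abelian then so is $H$, and both are cyclic by Proposition~\ref{singdef}. Since $G\cong H$ they have the same order, so Proposition~\ref{conjab} immediately yields a conjugating element. Now suppose $G$ is nonabelian; then $H$ is too. By Proposition~\ref{secchar} we have $q \equiv 3 \bmod 4$ and decompositions $G = G_2 \times C_G$, $H = H_2 \times C_H$, where $G_2$ and $H_2$ are nonabelian primitive $2$-subgroups of $\mathrm{GL}(2,\mathbb{F})$ and $C_G,C_H$ are cyclic groups of odd-order scalar matrices. The isomorphism $G \cong H$ restricts (on Sylow subgroups) to $G_2 \cong H_2$ and to $C_G \cong C_H$; in particular $|C_G| = |C_H|$.

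Next I apply Theorem~\ref{isoeqconj} with the prime $n = 2$: this is legal because $p$ is odd (as $q \equiv 3 \bmod 4$), and $G_2,H_2$ are nonabelian $2$-subgroups of $\mathrm{GL}(2,\mathbb{F})$. The theorem gives $g \in \mathrm{GL}(2,\mathbb{F})$ with $g^{-1} G_2 g = H_2$. Since $C_G$ and $C_H$ are scalar subgroups of the same order inside the cyclic group $\mathbb{F}^{\!\times} \cdot 1_2$, and a cyclic group has a unique subgroup of each order, we actually have $C_G = C_H$. Scalars are central, so $g^{-1} C_G g = C_G = C_H$, and therefore $g^{-1} G g = (g^{-1} G_2 g)(g^{-1} C_G g) = H_2 C_H = H$, as required.

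The one place requiring care is the step $C_G = C_H$: isomorphism of the odd parts only gives equal orders, but scalar-ness (furnished by Proposition~\ref{secchar}) together with the cyclicity of $\mathbb{F}^{\!\times}$ upgrades this to equality of subgroups, which is exactly what is needed to ensure the conjugator $g$ handling $G_2$ also works for the whole of $G$. Apart from this observation, the real content is packaged into Theorem~\ref{isoeqconj}, so there is no further obstacle.
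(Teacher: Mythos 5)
Your proposal is correct and follows the paper's own route exactly: the abelian case is handled by Proposition~\ref{conjab}, and the nonabelian case by combining the decomposition of Proposition~\ref{secchar} with Theorem~\ref{isoeqconj} applied to the Sylow $2$-subgroups. The only addition is that you spell out the details the paper leaves implicit, in particular the observation that the odd scalar parts coincide as subgroups (uniqueness of subgroups of a given order in the cyclic group of scalars), so the conjugator for the $2$-parts conjugates the whole groups.
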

\begin{proof}
 The result is clear for abelian groups by
 Proposition~\ref{conjab}; otherwise  
 we appeal to Proposition~\ref{secchar} and 
 Theorem~\ref{isoeqconj}.
\end{proof}

\begin{lemma}
\label{2gps}
 Suppose $q\equiv 3$ mod $4$ and let $\omega$ be 
 a generator of ${\rm GF}(q^2)^{\times}$. 
 If $2^t$ is the largest power of $2$ dividing 
 $q+1$ then the subgroup of  
 ${\rm GL}(2,\mathbb{F})$  generated by 
$$
x= \left(
\begin{array}{cc}
 0 & 1 \\
 -\omega^{q+1} & \omega + \omega^q 
\end{array} \right)^{(q^2-1)/2^{t+1}}
 , \ \ \ \ \ \ \ 
y= \left(
\begin{array}{cc}
 1 & 0 \\
 \omega + \omega^q & -1 
\end{array}\right) 
$$
 
\noindent is a Sylow $2$-subgroup, and is semidihedral of
 order $2^{t+2}$.
 There are dihedral and generalised quaternion subgroups
 of ${\rm GL}(2,\mathbb{F})$ of every order $2^s$, 
 $3 \leq s \leq t+1$, unique up to conjugacy 
 at every order. Up to conjugacy, the unique 
 {\rm (}nonabelian{\rm )} semidihedral $2$-group in  
 ${\rm GL}(2,\mathbb{F})$ is the Sylow $2$-subgroup 
 indicated above.
\end{lemma}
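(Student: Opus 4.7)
\emph{Plan.} I would first recognise $\langle x,y\rangle$ as semidihedral of order $2^{t+2}$; show it is a Sylow $2$-subgroup by comparing with $|\mathrm{GL}(2,\mathbb{F})|_2$; exhibit the dihedral and generalised quaternion subgroups of intermediate orders inside the Sylow; and then deduce uniqueness from Theorem~\ref{isoeqconj}, the semidihedral uniqueness requiring an extra step to rule out smaller orders.

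Set $M:=\bigl(\begin{smallmatrix}0&1\\-\omega^{q+1}&\omega+\omega^q\end{smallmatrix}\bigr)$. Its characteristic polynomial is the minimal polynomial $(X-\omega)(X-\omega^q)$ of $\omega$ over $\mathbb{F}$, so $M$ has eigenvalues $\omega,\omega^q$ and order $q^2-1$; hence $x=M^{(q^2-1)/2^{t+1}}$ has order $2^{t+1}$. A direct matrix calculation yields $y^2=I$ and $yMy^{-1}=M^q$, so $y$ realises the nontrivial Galois automorphism of $\mathrm{GF}(q^2)/\mathbb{F}$ on the Singer cycle $\langle M\rangle$; consequently $yxy^{-1}=x^q$. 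Since $q$ is odd and $2^t$ exactly divides $q+1$, $q\equiv 2^t-1\pmod{2^{t+1}}$, giving $yxy^{-1}=x^{2^t-1}\ne x$; in particular $y\notin\langle x\rangle$, so $\langle x,y\rangle$ has order $2^{t+2}$ and is semidihedral. As $|\mathrm{GL}(2,\mathbb{F})|_2=(q-1)_2^2(q+1)_2=2^{t+2}$, this group is a Sylow $2$-subgroup.

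The index-$2$ subgroups $\langle x^2,y\rangle$ and $\langle x^2,xy\rangle$ of $\langle x,y\rangle$ satisfy (directly from the relations) the dihedral and generalised quaternion presentations of order $2^{t+1}$ respectively, and they contain dihedral and generalised quaternion subgroups of each order $2^s$, $3\le s\le t+1$. Uniqueness up to conjugacy at each fixed type and order then follows from Theorem~\ref{isoeqconj}, which for nonabelian $2$-subgroups of $\mathrm{GL}(2,\mathbb{F})$ converts conjugacy into isomorphism.

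The main obstacle is the semidihedral uniqueness: I must exclude any semidihedral group of order $2^m$, $4\le m<t+2$, from $\mathrm{GL}(2,\mathbb{F})$. If such an $H$ existed, its unique cyclic subgroup $C$ of index $2$ would have order $2^{m-1}\ge 8$. Any reducible abelian $2$-subgroup is completely reducible (Maschke, as $p\ne 2$), hence diagonalisable, hence of order at most $(q-1)_2^2=4$; so $C$ is irreducible. Proposition~\ref{normcentr} then places $H$ in the normaliser of the Singer cycle containing $C$, whose involutions outside that cycle act as Frobenius $c\mapsto c^q$ on $C$. But $2^{m-1}\mid 2^t\mid q+1$ (since $m\le t+1$) forces $c^q=c^{-1}$, the dihedral relation on $C$, incompatible with the semidihedral one---a contradiction. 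Hence $m=t+2$, and Theorem~\ref{isoeqconj} completes the proof.
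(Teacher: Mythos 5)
Your proof is correct, and its core follows the paper's own route: a direct verification that $\langle x,y\rangle$ is semidihedral of order $2^{t+2}$ (hence a Sylow $2$-subgroup, by the count $|\mathrm{GL}(2,\mathbb{F})|_2=(q-1)_2^2(q+1)_2=2^{t+2}$), the dihedral and generalised quaternion subgroups of orders $2^3,\ldots,2^{t+1}$ read off inside it, and uniqueness up to conjugacy obtained from the isomorphism-implies-conjugacy principle --- the paper invokes Corollary~\ref{conl}, while you use Theorem~\ref{isoeqconj} directly, which is the same mechanism for nonabelian $2$-subgroups of $\mathrm{GL}(2,\mathbb{F})$. The one point where you genuinely diverge is the exclusion of semidihedral subgroups of order strictly less than $2^{t+2}$: you argue via Proposition~\ref{normcentr} that the index-$2$ cyclic subgroup $C$ of such a group would be an irreducible subgroup of $\Delta^{\!\times}$, on which any non-centralising element of the normaliser acts as $c\mapsto c^q$, and since $|C|$ would divide $2^t\mid q+1$ this is inversion, contradicting the semidihedral relation. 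That step is correct and self-contained, but it is more elaborate than necessary: by Sylow's theorem every $2$-subgroup of $\mathrm{GL}(2,\mathbb{F})$ lies in a conjugate of $T$, and no proper subgroup of a semidihedral $2$-group is semidihedral (its maximal subgroups are cyclic, dihedral and generalised quaternion), which disposes of the smaller orders immediately; the paper leaves this point implicit. Your version buys independence from the subgroup structure of $T$ at the cost of extra machinery; otherwise the two proofs coincide.
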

\begin{proof}
 We readily verify that $|x| = 2^{t+1}$, $|y|=2$, and 
 $T = \langle x \rangle \rtimes \langle  y \rangle$
 is semidihedral. 
 The uniqueness statement regarding the proper 
 subgroups of $T$ is an instance of 
 Corollary~\ref{conl}.  
\end{proof}
\begin{remark}
 It is not difficult to write down explicit generators for 
 all nonabelian 2-subgroups of $\mathrm{GL}(2,\mathbb{F})$
 from the Sylow 2-subgroup generators given in 
 Lemma~\ref{2gps}. 
\end{remark}
\begin{corollary}
\label{cclcount}
 Assuming the notation and conditions of 
 Lemma{\rm~\ref{2gps}}, the number of distinct 
 $\mathrm{GL}(2,\mathbb{F})$-conjugacy 
 classes of nonabelian nilpotent primitive  subgroups of 
 $\mathrm{GL}(2,\mathbb{F})$ is
 $r(t-1)$, where $r$ is the number of 
 positive integer divisors of $q-1$.
\end{corollary}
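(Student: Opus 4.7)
The plan is to reduce to counting isomorphism classes via the splitting $G = G_2 \times C$ from Proposition~\ref{secchar}, then multiply the number of $\mathrm{GL}(2,\mathbb{F})$-conjugacy classes of primitive nonabelian $2$-subgroups by the number of admissible odd-order scalar factors $C$. By Proposition~\ref{secchar}, every nonabelian nilpotent primitive $G \leq \mathrm{GL}(2,\mathbb{F})$ splits in this way, with $C$ the unique scalar subgroup of some odd order dividing $q-1$; and by Corollary~\ref{conl}, two such groups are $\mathrm{GL}(2,\mathbb{F})$-conjugate if and only if they are isomorphic, which in turn amounts to $G_2 \cong G_2'$ together with $|C| = |C'|$.

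To enumerate primitive nonabelian $2$-subgroups up to conjugacy, I observe that when $q \equiv 3$ mod $4$ a Sylow $2$-subgroup of the full monomial subgroup of $\mathrm{GL}(2,\mathbb{F})$ has order $8$ (indeed, it is dihedral of order $8$). Hence any nonabelian $2$-subgroup of order at least $16$ is non-monomial, and since imprimitive irreducible linear groups in degree $2$ are necessarily monomial, it is primitive. Among the isomorphism types listed in Lemma~\ref{2gps}, the unique class of dihedral groups of order $8$ therefore fails to be primitive (it lies in the monomial group), while the quaternion group $Q_8$ is primitive (it does not embed in a dihedral group of order $8$). Counting: dihedral groups of orders $2^s$ for $4 \leq s \leq t+1$ contribute $t-2$ classes, generalised quaternions for $3 \leq s \leq t+1$ contribute $t-1$ classes, and the semidihedral Sylow $2$-subgroup of order $2^{t+2}$ contributes one more, giving $(t-2)+(t-1)+1 = 2(t-1)$ in total.

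Finally, since $q \equiv 3$ mod $4$ one writes $q-1 = 2m$ with $m$ odd, so each divisor of $q-1$ is either a divisor of $m$ or twice such a divisor; hence $r$ is exactly twice the number of odd divisors of $q-1$. The number of admissible $C$ equals the number of odd divisors of $q-1$, namely $r/2$, and multiplying yields $2(t-1)\cdot r/2 = r(t-1)$. The main subtlety is the order-$8$ bookkeeping: ensuring that the unique dihedral class of order $8$ is excluded (as monomial) while $Q_8$ is retained (as primitive). Both facts follow from the order bound on nonabelian monomial $2$-subgroups together with the nonembedding of $Q_8$ into a dihedral group of order $8$.
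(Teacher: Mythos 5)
Your proposal is correct and follows essentially the same route as the paper: decompose $G = G_2 \times C$ via Proposition~\ref{secchar}, count the $r/2$ odd-order scalar subgroups $C$, count $2(t-1)$ conjugacy classes of primitive nonabelian $2$-subgroups from Theorem~\ref{konzal} and Lemma~\ref{2gps}, and conclude with Corollary~\ref{conl}. The only difference is that you spell out the bookkeeping at order $8$ (excluding the monomial dihedral class, keeping $Q_8$), which the paper leaves implicit; the totals agree, since $2(r/2)+2(t-2)r/2 = 2(t-1)\cdot r/2 = r(t-1)$.
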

\begin{proof}
 If $G \leq \mathrm{GL}(2,\mathbb{F})$ is 
 nonabelian nilpotent primitive then $G=G_2 \times C$ 
 for some odd order subgroup $C$ of $\mathbb{F}^{\times}1_2$.
 There are precisely $r/2$ choices for $C$. Using 
 Theorem~\ref{konzal} and Lemma~\ref{2gps} we then count
 $2(r/2)+2(t-2)r/2$ possible isomorphism types for $G$
 in total, so the result follows from 
 Corollary~\ref{conl}.  
\end{proof}

\section{Degree greater than 2}
\label{greater2}
 
 Proposition~\ref{oddfact} places a restriction on the 
 degree of a  nilpotent primitive linear group over a finite
 field. 
 We give another restriction below. The idea is to
 connect  general degree to degree 2, by means of 
 Proposition~\ref{absirredcase1} and the following.
\begin{theorem}
\label{prewehr}
 Let $G$ be primitive.
 For some $m$ dividing $n$ 
 there is an isomorphism of $G$ onto an absolutely 
 irreducible primitive subgroup of
 $\mathrm{GL}(n/m,q^m)$. 
\end{theorem}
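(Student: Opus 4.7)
The plan is to realise $G$ over its commuting field. Let $E = C_{M_n(\mathbb{F})}(G) = \mathrm{End}_{\mathbb{F}G}(V)$. Since $G$ is irreducible, Schur's lemma makes $E$ a finite division ring, hence a field by Wedderburn's theorem, and $E$ contains $\mathbb{F}\cdot 1_n$. Set $m = |E : \mathbb{F}1_n|$, so that $E \cong \mathrm{GF}(q^m)$.

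Now view $V$ as a vector space over $E$ via the action of $E$ on $V$. Counting $\mathbb{F}$-dimensions yields $n = m \cdot \dim_E V$, so $m \mid n$ and $\dim_E V = n/m$. As $E$ centralises $G$, the group $G$ embeds in $\mathrm{GL}_E(V) \cong \mathrm{GL}(n/m, q^m)$, and this embedding is the desired isomorphism. It remains to verify that the image is absolutely irreducible and primitive.

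For absolute irreducibility: any $E$-linear $G$-equivariant endomorphism of $V$ is in particular $\mathbb{F}$-linear and $G$-equivariant, so it lies in $E$, while conversely each element of $E$ is such an endomorphism; hence $\mathrm{End}_{EG}(V) = E$. Since any $EG$-submodule of $V$ is in particular an $\mathbb{F}G$-submodule, $V$ is irreducible as an $EG$-module. The standard criterion that an irreducible module whose endomorphism ring equals the ground field is absolutely irreducible (equivalently, the image of the group algebra densely fills $\mathrm{End}_E(V)$ and this survives base change to $\overline{E}$) then gives absolute irreducibility over $E$. For primitivity: any $G$-permuted direct sum decomposition of $V$ over $E$ with more than one summand is, by restriction of scalars, such a decomposition over $\mathbb{F}$, contradicting primitivity of $G$ over $\mathbb{F}$.

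The only conceptually delicate step is the passage from $\mathrm{End}_{EG}(V) = E$ to absolute irreducibility; it is crucial here that $E$ really is a commutative field (which is why Wedderburn's theorem and the finiteness of $\mathbb{F}$ enter). The rest of the argument is dimension bookkeeping and scalar restriction.
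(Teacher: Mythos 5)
Your proof is correct: the paper itself offers no argument, only a citation to Wehrfritz [1.19, p.~12], and your realisation of $G$ over the centraliser field $E=\mathrm{End}_{\mathbb{F}G}(V)$, with the checks that $\mathrm{End}_{EG}(V)=E$ gives absolute irreducibility and that restriction of scalars transports any $E$-imprimitivity system to an $\mathbb{F}$-imprimitivity system, is precisely the standard proof underlying that cited result. So you have supplied, correctly, the detail the paper delegates to the reference.
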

\begin{proof}
 See \cite[1.19, p.12]{Wehrfritz}.
\end{proof}
\begin{proposition}
\label{2odegree}
 There exist  nonabelian nilpotent primitive subgroups of 
 $\mathrm{GL}(n,\mathbb{F})$ only if $n = 2m$, 
 $m$ odd. 
\end{proposition}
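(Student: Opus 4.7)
The plan is to reduce the general case to degree $2$ by enlarging scalars, then read off the parity of $n/2$ from the congruence conditions that govern the degree-$2$ classification. The ingredients are Proposition~\ref{oddfact} (which already gives $n$ even), Theorem~\ref{prewehr} (field-extension reduction), Proposition~\ref{absirredcase1} (absolutely irreducible nonabelian nilpotent primitive forces degree $2$), and Propositions~\ref{secchar} and Theorem~\ref{konzal} (which tie nonabelian nilpotent primitivity in degree $2$ to $q \equiv 3 \pmod 4$).

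First I would note that Proposition~\ref{oddfact} writes $n = 2m$ for some positive integer $m$, so only the parity of $m$ is in question. Next, invoking Theorem~\ref{prewehr}, choose a divisor $d$ of $n$ so that $G$ is isomorphic to an absolutely irreducible primitive subgroup $\bar G$ of $\mathrm{GL}(n/d, q^d)$. Nilpotency and nonabelianness transfer across the isomorphism, so $\bar G$ is a nonabelian nilpotent primitive subgroup that is in addition absolutely irreducible. Proposition~\ref{absirredcase1}, applied over the field $\mathbb{F}_{q^d}$, then forces $n/d = 2$, whence $d = m$ and $\bar G \leq \mathrm{GL}(2, \mathbb{F}_{q^m})$.

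Now I would apply Proposition~\ref{secchar} to $\bar G$, which yields $q^m \equiv 3 \pmod 4$. By Theorem~\ref{konzal}, applied to the original $G$, we already have $q \equiv 3 \pmod 4$, and in particular $q$ is odd. If $m$ were even, then $q^m = (q^2)^{m/2} \equiv 1 \pmod 4$, contradicting $q^m \equiv 3 \pmod 4$. Hence $m$ must be odd, completing the argument.

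The whole proof hinges on the passage to $\bar G$, so the only point requiring care is verifying that all hypotheses needed to invoke Propositions~\ref{absirredcase1} and~\ref{secchar} persist under the isomorphism produced by Theorem~\ref{prewehr}. Since that isomorphism lands $\bar G$ inside $\mathrm{GL}(n/d, q^d)$ as an absolutely irreducible \emph{primitive} subgroup, and nilpotency as well as nonabelianness are abstract group-theoretic properties, nothing is lost. I anticipate no serious obstacle; the only easy slip would be to overlook that $q$ is odd before performing the mod-$4$ computation, but this is immediate from Theorem~\ref{konzal}.
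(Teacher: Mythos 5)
Your proposal is correct and takes essentially the same route as the paper: reduce via Theorem~\ref{prewehr} to an absolutely irreducible primitive group, force degree $2$ by Proposition~\ref{absirredcase1}, and conclude $m$ is odd from $q\equiv 3 \pmod 4$ and $q^m \equiv 3 \pmod 4$. The only cosmetic difference is that you cite Proposition~\ref{secchar} for the congruence over $\mathrm{GF}(q^m)$ where the paper invokes Theorem~\ref{konzal} directly, and you spell out the mod-$4$ parity computation that the paper leaves implicit.
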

\begin{proof}
 By Theorem~\ref{prewehr}, suppose 
 there exist absolutely irreducible nilpotent primitive
 subgroups of $\mathrm{GL}(n/m,q^m)$, $n/m>1$.
 Proposition~\ref{absirredcase1} dictates that 
 $n/m=2$, and $q$, $q^{m}$ are both congruent to 3 mod 4 
 by Theorem~\ref{konzal}. Thus $m$ is odd.
\end{proof}

 In the sequel, our discussion of the structure of a nonabelian 
 nilpotent primitive linear group $G$
 turns on whether or not $G$ has cyclic primitive subgroups. 
 Once a cyclic subgroup $A$ of $G$ is known to be 
 irreducible, a simple order criterion 
 (Proposition~\ref{likersim}) can be used to test
 primitivity of $A$. If $A$   
 is found to be imprimitive, then it must have 
 an imprimitivity system of size 2 
 (Theorem~\ref{evenimprim}).
\begin{lemma}
\label{nakeasy}
 Let $G$ be irreducible, with a proper subgroup $K$ 
 of index $k$. 
\begin{itemize}
\item[{\rm (i)}]  The dimension of a nonzero $K$-submodule 
  of $V$ is at least $n/k$.
 \item[{\rm (ii)}]  If $V$ has a $K$-submodule $U$ of
 dimension $n/k$ then 
 $G$ is imprimitive.
\end{itemize}
\end{lemma}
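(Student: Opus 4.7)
The plan is to exploit the classical argument based on coset representatives of $K$ in $G$. Let $g_1,\ldots,g_k$ be a right transversal of $K$ in $G$, so that $G = \bigsqcup_{i=1}^k Kg_i$. Given a nonzero $K$-submodule $U$ of $V$, form the sum $W = \sum_{i=1}^k Ug_i$. The key observation is that right multiplication by any $h \in G$ permutes the right cosets $Kg_i$, so we can write $g_i h = k_i\, g_{\sigma(i)}$ with $k_i \in K$ and $\sigma$ a permutation of $\{1,\ldots,k\}$ depending on $h$. Since $U$ is stable under $K$, this gives $Ug_ih = U k_i g_{\sigma(i)} = U g_{\sigma(i)}$, so $W$ is $G$-invariant.

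For (i), irreducibility of $G$ forces $W = V$, and since $\dim W \leq \sum_{i=1}^k \dim(Ug_i) = k \dim U$, we obtain $\dim U \geq n/k$ at once.

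For (ii), if $\dim U = n/k$ then $\sum_{i=1}^k \dim(Ug_i) = n = \dim V = \dim W$, so the sum is forced to be direct: $V = Ug_1 \oplus \cdots \oplus Ug_k$. In particular the subspaces $Ug_i$ are pairwise distinct (none is zero), and the identity $Ug_ih = Ug_{\sigma(i)}$ established above says that $G$ permutes the set $\mathcal{S} = \{Ug_1,\ldots,Ug_k\}$. Because $K$ is proper, $k \geq 2$, and $\mathcal{S}$ is a genuine system of imprimitivity, proving that $G$ is imprimitive.

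No real obstacle is anticipated; the only point requiring a little care is the verification that $W$ is $G$-stable, which rests on the elementary fact that right cosets are permuted by right multiplication, together with the $K$-invariance of $U$. The dimension bookkeeping in (ii), which rules out coincidences among the $Ug_i$, is immediate once directness is forced by equality of dimensions.
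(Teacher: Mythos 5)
Your proof is correct, and it takes a more self-contained route than the paper, which disposes of the lemma by citation: the paper's proof simply invokes Nakayama Reciprocity and refers to the proof of Sim's result \cite[2.6]{Sim}. In that approach one observes that $\mathrm{Hom}_K(U,V|_K)\neq 0$ forces, by reciprocity, a nonzero $G$-homomorphism $U^G\rightarrow V$; irreducibility of $V$ makes it a quotient of the induced module $U^G$, giving $n\leq k\dim U$ for (i), and equality of dimensions in (ii) makes $V\cong U^G$, which carries the block decomposition of an induced module and is therefore imprimitive. Your argument with the translates $Ug_1,\ldots,Ug_k$ of a right transversal is exactly the elementary shadow of this: the sum $W=\sum_i Ug_i$ is the image of the natural map $U^G\rightarrow V$, your invariance check replaces the reciprocity step, and the dimension count forcing directness replaces the identification $V\cong U^G$. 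What your version buys is transparency and independence from the induced-module machinery (and it visibly works over any field, as the paper remarks), together with an explicit imprimitivity system $\{Ug_1,\ldots,Ug_k\}$; what the paper's citation buys is brevity and placement of the lemma inside the standard Clifford--Nakayama framework used elsewhere (notably in Sim's classification). All the small points you flag are handled correctly: $K$-stability of $U$ gives $Ug_ih=Ug_{\sigma(i)}$, irreducibility gives $W=V$, and in (ii) the equality $\sum_i\dim(Ug_i)=\dim V$ forces the sum to be direct, whence the $Ug_i$ are distinct, nonzero, proper, and permuted by $G$ with $k\geq 2$.
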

\begin{proof}
 This lemma (which is true for any field in place of $\mathbb{F}$)
 follows mainly from Nakayama Reciprocity; 
 cf. the proof of \cite[2.6]{Sim}.
\end{proof}

 For the rest of the section, $q\equiv 3$ mod 4
 and $n=2m$, $m>1$ odd.   
\begin{theorem}
\label{evenimprim}
 Let $G= G_2 \times C$ be nonabelian nilpotent primitive, 
 and let $A$ be a cyclic subgroup of $G$ of index $2$.
 If $A$ is imprimitive then every $A$-system of  
 imprimitivity consists of $2$ components. 
\end{theorem}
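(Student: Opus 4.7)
The approach is a parity argument by contradiction. First I would reduce to the case of an odd prime $k$: if $A$ has an imprimitivity system of size $k > 2$, then since $n = 2m$ with $m$ odd, $k$ cannot be a power of $2$ exceeding $2$, so $k$ has an odd prime divisor $p$ dividing $m$; by Lemma~\ref{sizeprim}, $A$ has a system of size $p$, so I may assume $k$ itself is an odd prime dividing $m$.

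The central step is to count the set $\mathcal{X}$ of $A$-imprimitivity systems of size $k$. By Proposition~\ref{singdef}, identify $V$ with $\Delta = \mathbb{F}_{q^n}$, with $A = \langle \alpha \rangle$ acting by multiplication; let $A'$ be the unique index-$k$ subgroup of $A$ and $\Delta'$ the unique subfield of $\Delta$ of index $k$, so that $A' \leq (\Delta')^\times$. The components of any $\mathcal{S} \in \mathcal{X}$ are $1$-dimensional $\Delta'$-subspaces of $\Delta$, and I would show that $\mathcal{X}$ is in bijection with the set of $A$-orbits on the collection $\mathcal{L}$ of all such subspaces. The key point is that an orbit $\{v\alpha^i\Delta' : 0 \leq i < k\}$ spans $V$ directly because $\{1, \alpha, \ldots, \alpha^{k-1}\}$ is a $\Delta'$-basis of $\Delta$ (since $\alpha$ has degree $k$ over $\Delta'$). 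Because $A \cap (\Delta')^\times = A'$ (otherwise $A \leq (\Delta')^\times$ would be reducible on $V$), each orbit has size exactly $k$, giving
\[
|\mathcal{X}| \;=\; \frac{q^n - 1}{k(q^{n/k} - 1)}.
\]
A $2$-adic valuation calculation shows $|\mathcal{X}|$ is odd: writing $q^n - 1 = (q^m - 1)(q^m + 1)$ and using $q \equiv 3$ mod $4$ with $m$ odd gives $v_2(q^n - 1) = 1 + v_2(q+1)$; the same identity with $m$ replaced by the odd number $m/k$ yields $v_2(q^{n/k} - 1) = 1 + v_2(q+1)$; and $v_2(k) = 0$.

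Finally, since $A$ is normal in $G$ and acts trivially on $\mathcal{X}$, the quotient $G/A$ of order $2$ acts on $\mathcal{X}$, and any $t \in G \setminus A$ descends to an involution (as $t^2 \in A$). An involution on a set of odd cardinality has a fixed point $\mathcal{S}_0$, so $G = \langle A, t \rangle$ preserves $\mathcal{S}_0$---contradicting the primitivity of $G$. The main technical work lies in verifying that every $A$-orbit on $\mathcal{L}$ really does give a direct sum decomposition (so that the orbit count equals $|\mathcal{X}|$) and in executing the $2$-adic valuation calculation; the parity argument itself then closes the proof cleanly.
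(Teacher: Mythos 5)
Your argument is correct, but it is a genuinely different route from the paper's. The paper rules out an odd prime block size $t$ by a module-theoretic argument: it passes to the subgroups $A_2D$ and $G_2D$ (with $D$ the index-$t$ subgroup of $C$), applies Clifford's Theorem to compare the dimensions of their irreducible constituents, and eliminates both possibilities for the number of $G_2D$-constituents using Lemma~\ref{nakeasy} (Nakayama reciprocity), finishing with Lemma~\ref{sizeprim}. You instead work inside the Singer torus: identifying $V$ with $\Delta=\mathrm{GF}(q^n)$, you show every size-$k$ system consists of $\Delta'$-lines, count the systems as $A$-orbits on lines to get $(q^n-1)/\bigl(k(q^{n/k}-1)\bigr)$, prove this is odd by a $2$-adic valuation computation using $q\equiv 3$ mod $4$ and $m$, $m/k$ odd, and then use the order-$2$ quotient $G/A$ acting on this odd-cardinality set to produce a $G$-invariant system, contradicting primitivity of $G$. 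Your approach is more explicit (it even yields the exact number of size-$k$ systems and directly exhibits how primitivity would fail), at the cost of the counting and valuation work; the paper's proof is shorter because it leans on Lemma~\ref{nakeasy} and avoids any enumeration. Two details you should make explicit when writing it up: irreducibility of $A$ (Lemma~\ref{ind2prim}) is what licenses the identification via Proposition~\ref{singdef}; and the inclusion $A'\leq(\Delta')^{\times}$, equivalently $|A|/k$ dividing $q^{n/k}-1$, is not automatic from the definition of $A'$ as the index-$k$ subgroup --- it comes from the contradiction hypothesis via Proposition~\ref{likesim}(i) (the kernel of the block action is $A'$ and acts faithfully and irreducibly on a block of dimension $n/k$, which also gives $\langle A'\rangle_{\mathbb{F}}=\Delta'$ and hence that the blocks are $\Delta'$-lines, cf.\ Lemma~\ref{prea2c}). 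With those points supplied, the orbit count, the valuation $v_2\bigl((q^{2m}-1)\bigr)=1+v_2(q+1)=v_2\bigl((q^{2m/k}-1)\bigr)$, and the fixed-point argument all go through.
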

\begin{proof}
 Suppose $A$ has an imprimitivity system of odd prime
 size $t$. Let $D$ be the index $t$ subgroup of $C$.
 By Proposition~\ref{likesim} the dimension of an 
 irreducible $A_2D$-submodule of $V$ is  
 $n/t$. By Clifford's Theorem $G_2D$ is completely 
 reducible, with say $l$ irreducible parts of equal degree. 
 For the same reason a $G_2D$-submodule of $V$ 
 is completely reducible as an $A_2D$-module.
 Thus $n/l$ is divisible by $n/t$, so $l$ is 1 or $t$. 
 However $l\neq t$ by 
 Lemma~\ref{nakeasy} (ii). 
 Also $l \neq 1$, for if $G_2D$ were irreducible then 
 an $A_2D$-submodule of $V$ would have dimension at
 least $m$ by Lemma~\ref{nakeasy} (i).
 Thus the only prime dividing the size $s$ of an
 $A$-system of imprimitivity is 2 by Lemma~\ref{sizeprim},
 and as $s$ divides $2m$, we are done.
\end{proof}

 Until further notice, we fix the following notation and 
 hypotheses. Let $G = G_2 \times C$ where 
 $C = \langle c\rangle$ is odd order cyclic, 
 and $G_2$ is (nonabelian) generalised quaternion, 
 dihedral or semidihedral. 
 That is, $G_2 = \langle A_2, g\rangle$, 
 $|G_2 : A_2| = 2$, 
 and $A_2 = \langle a\rangle$, $|a| \geq 4$. 
 Suppose $A = \langle c, a\rangle$ is irreducible. 
 Then $g^2$ $\in$ $\langle -1_n \rangle \leq A_2$.
 We define a field automorphism $\delta$ of 
 $\Delta  = \langle A\rangle_\mathbb{F}$ by
 $\delta: x \mapsto g^{-1}xg$, 
 $x \in \Delta$. Let $\mathbb{K}$ be the $\delta$-invariant 
 subfield  $\{x \in \Delta: gx = xg\}$ of $\Delta$. 
 Note that $C \subseteq \mathbb{K}$. In addition
 $|\Delta : \mathbb{K}|$ $=$ $2$, so
 $|\mathbb{K} : \mathbb{F}1_n| = m$ and thus 
 $|\mathbb{K}|$ $\equiv$ 3 mod 4.
\begin{remark}
 $\mathbb{K}$ is the centraliser of $G$ in 
 $\mathrm{Mat}(n, \mathbb{F})$.
\end{remark}
\begin{lemma}
\label{delinv}
 $|\langle A_2\rangle_\mathbb{F} : \mathbb{F}1_n|=2$ 
 and $\langle A_2\rangle_\mathbb{F} \cap \mathbb{K}$ 
 $=$ $\mathbb{F}1_n$.
\end{lemma}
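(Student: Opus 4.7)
My plan is to derive both assertions from the observation that $|E:\mathbb{F}1_n|$ must be a power of $2$, where $E=\langle A_2\rangle_\mathbb{F}=\mathbb{F}[a]$. Since $E$ is a finite commutative subring of the field $\Delta$, it is itself a subfield containing $\mathbb{F}1_n$. Write $d=|E:\mathbb{F}1_n|$, so that $|E|=q^d$ and $d$ divides $n=2m$.

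First I would show that $d=2$. The element $a$ has order $|a|=2^r$ for some $r\geq 2$, and as $E$ is the smallest subfield of $\Delta$ containing $\mathbb{F}$ and $a$, the degree $d$ equals the multiplicative order of $q$ modulo $2^r$. This divides $\varphi(2^r)=2^{r-1}$, so $d$ is itself a power of $2$. Because $d$ divides $2m$ and $m$ is odd, $d\in\{1,2\}$. The case $d=1$ would put $a\in\mathbb{F}1_n$, forcing $a$ to commute with $g$ and making $G_2=\langle a,g\rangle$ abelian, contrary to hypothesis. Hence $d=2$, which is the first assertion.

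For the second claim I would exploit the action of $\delta$ on $E$. Since $A_2$ is normal in $G_2$, $\delta(A_2)=A_2$, so $\delta$ restricts to an $\mathbb{F}$-algebra automorphism of $E$. This restriction is an involution because $g^2\in\langle -1_n\rangle$ is central (so $\delta^2$ is trivial on $\Delta$), and it is nontrivial because $\delta|_E=\mathrm{id}$ would mean $a\in\mathbb{K}$, again making $G_2$ abelian. Hence the fixed subfield $E\cap\mathbb{K}$ has index $2$ in $E$; combined with $|E:\mathbb{F}1_n|=2$ this forces $E\cap\mathbb{K}=\mathbb{F}1_n$.

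The only point requiring any real insight is the reduction to $d$ being a power of $2$; once this is in hand, the parity hypothesis on $m$ does all the work, and the remainder is routine Galois bookkeeping for finite fields.
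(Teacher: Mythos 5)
Your proof is correct, but it follows a genuinely different route from the paper in both halves. For the degree claim, the paper starts with the order-$4$ subgroup $B\leq A_2$: since $q\equiv 3 \bmod 4$ the scalars contain no fourth root of unity, so $\langle B\rangle_{\mathbb{F}}$ is already a quadratic extension of $\mathbb{F}1_n$, and an induction up the cyclic $2$-group $A_2$ (resting on the fact that for $m$ odd the full $2$-part of $q^{2m}-1$ already divides $q^2-1$) shows no further growth occurs. You instead identify $|\langle A_2\rangle_{\mathbb{F}}:\mathbb{F}1_n|$ with the multiplicative order of $q$ modulo $2^r$, note it is a $2$-power because it divides $\varphi(2^r)$, and squeeze it against $n=2m$ with $m$ odd; you do not even need $q\equiv 3\bmod 4$ here, only the nonabelianness of $G_2$ to exclude degree $1$. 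For the intersection, the paper's argument is a one-liner from the standing fact $|\mathbb{K}|\equiv 3 \bmod 4$: $\langle A_2\rangle_{\mathbb{F}}$ contains an element of order $4$ while $\mathbb{K}$ does not. You instead restrict $\delta$ to $E=\langle A_2\rangle_{\mathbb{F}}$ (legitimate since $A_2$ is normal in $G_2$), observe that this restriction is a nontrivial involution fixing $\mathbb{F}1_n$, and compare fixed fields; this is slightly longer but avoids invoking the congruence on $|\mathbb{K}|$, at the cost of needing normality of $A_2$ and a little Galois bookkeeping. Both arguments are sound; yours is arguably more self-contained, while the paper's exploits the arithmetic consequences of $q\equiv 3\bmod 4$ already established in the surrounding setup.
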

\begin{proof}
 Let $B$ be the subgroup of $A_2$ of order 4. Since 
 $B^2$ but not $B$ is scalar,  
 $|\langle B \rangle_\mathbb{F} : \mathbb{F}1_n| =2$.  
 An inductive argument then shows that 
 $|\langle A_2\rangle_\mathbb{F} : \mathbb{F}1_n| =2$.
 For the second claim we need only  
 observe that $\langle A_2 \rangle_\mathbb{F}$   
 contains an element of order 4 while $\mathbb{K}$ 
 does not.
\end{proof}

We denote a diagonal matrix as the ordered tuple of its main
diagonal entries.
\begin{lemma}
\label{prea2c}
If $\bar{A}\leq A$ then the irreducible $\bar{A}$-submodules
 of $V$ have dimension $|\langle \bar{A} \hspace*{.2mm} 
 \rangle_{\mathbb{F}} : \mathbb{F}1_n|$, 
and they are all isomorphic to each other.
\end{lemma}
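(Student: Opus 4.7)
The plan is to exploit the fact that $\langle \bar{A}\rangle_{\mathbb{F}}$ is a subfield of $\Delta$, then pass from the $\bar{A}$-module structure on $V$ to a vector-space structure over this subfield.

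First, I would note that since $\bar{A}\leq A$ and $A$ is cyclic of order dividing $q^n-1$ by Proposition~\ref{singdef}, $|\bar A|$ is coprime to $p$. Set $\mathbb{F}':=\langle \bar{A}\rangle_\mathbb{F}$; as a subalgebra of the field $\Delta$ it is a finite-dimensional commutative integral domain over $\mathbb{F}$, hence itself a field. In particular, $V$ is faithfully acted on by $\mathbb{F}'$.

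Second, and this is the main step, I would show that the $\bar{A}$-submodules of $V$ are precisely the $\mathbb{F}'$-subspaces. The direction ``$\mathbb{F}'$-subspace implies $\bar A$-submodule'' is immediate from $\bar A\subseteq \mathbb{F}'$; the reverse follows because $\mathbb{F}'=\mathbb{F}[\bar A]$, so any $\bar A$-invariant $\mathbb{F}$-subspace is automatically closed under $\mathbb{F}$-linear combinations of products of elements of $\bar A$, i.e., is $\mathbb{F}'$-invariant. Consequently $V$ is an $\mathbb{F}'$-vector space of $\mathbb{F}'$-dimension $n/|\mathbb{F}':\mathbb{F}1_n|$, and the $\bar A$-submodule lattice of $V$ coincides with its $\mathbb{F}'$-subspace lattice.

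Third, the irreducible $\bar A$-submodules of $V$ are then exactly the one-dimensional $\mathbb{F}'$-subspaces, each of $\mathbb{F}$-dimension $|\mathbb{F}':\mathbb{F}1_n|$, which is the first claim. For the second claim, any two one-dimensional $\mathbb{F}'$-subspaces are isomorphic as $\mathbb{F}'$-vector spaces, and such an $\mathbb{F}'$-linear isomorphism is automatically $\bar A$-equivariant since $\bar A\subseteq \mathbb{F}'$.

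The proof carries no real obstacle: everything flows from the one observation that the enveloping algebra of $\bar A$ is a field (inherited from $\Delta$), after which the rest is elementary linear algebra over that field. The only point needing brief verification is the translation of $\bar A$-invariance into $\mathbb{F}'$-invariance, and this is a direct consequence of the definition of the enveloping algebra.
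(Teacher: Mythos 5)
Your proof is correct, but it takes a genuinely different route from the paper. The paper applies Clifford's Theorem to $\bar{A}\trianglelefteq A$ (with $A$ abelian irreducible): the restriction of $V$ to $\bar{A}$ is a direct sum of $A$-conjugate irreducibles, which are all isomorphic because $A$ centralises $\bar{A}$, so up to conjugacy $\bar{A}=\langle (x,\ldots,x)\rangle$ with $\langle x\rangle\leq \mathrm{GL}(r,\mathbb{F})$ irreducible; then $\langle \bar{A}\rangle_{\mathbb{F}}\cong\langle x\rangle_{\mathbb{F}}$ has dimension $r$ by Proposition~\ref{singdef}, giving both claims at once. You avoid Clifford entirely: you note that $\langle\bar{A}\rangle_{\mathbb{F}}$ is a subfield of $\Delta$, identify the $\bar{A}$-submodule lattice of $V$ with its lattice of $\langle\bar{A}\rangle_{\mathbb{F}}$-subspaces (the key translation of $\bar{A}$-invariance into invariance under the enveloping algebra is exactly right, since the enveloping algebra is the $\mathbb{F}$-span of the group), and read off that the irreducible submodules are the one-dimensional $\langle\bar{A}\rangle_{\mathbb{F}}$-subspaces, pairwise isomorphic via $\langle\bar{A}\rangle_{\mathbb{F}}$-linear maps. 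Your argument is more elementary and self-contained, resting only on Proposition~\ref{singdef} (the coprimality remark and faithfulness are not actually needed); the paper's Clifford-based proof has the side benefit of producing the explicit block-diagonal form $\langle(x,\ldots,x)\rangle$ up to conjugacy, which is what is actually quoted later (e.g.\ in the proof of Theorem~\ref{firstclassif}, where $C$ is taken to be $\langle(x,x)\rangle$ up to conjugacy), though that form can also be recovered from your setup by choosing an $\langle\bar{A}\rangle_{\mathbb{F}}$-basis of $V$.
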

\begin{proof}
 By Clifford's Theorem,  
 $\bar{A}$ is conjugate to  
 $\langle (x, \ldots \, \! , x) \rangle$ 
 for some irreducible subgroup $\langle x \rangle$
 of $\mathrm{GL}(r, \mathbb{F})$,
 $r=|\langle x\rangle_{\mathbb{F}}:\mathbb{F}1_{r}|$ 
 dividing $n$. 
 Clearly then $\langle \bar{A} \hspace*{.2mm} 
 \rangle_{\mathbb{F}}$ $\cong$ 
 $\langle  x\rangle_{\mathbb{F}}$.    
\end{proof}
\begin{lemma} 
\label{a2cirred}
\begin{itemize}
\item[{\rm (i)}] 
 An irreducible $A_2$-submodule of $V$ has
 dimension $2$.
\item[{\rm (ii)}] 
 $\mathbb{K} = \langle C\rangle_\mathbb{F}$, and an 
 irreducible $C$-submodule of $V$ has dimension 
 $m$.
\end{itemize}
\end{lemma}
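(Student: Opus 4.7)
The plan is to read off both parts directly from Lemma~\ref{prea2c}, after identifying the relevant enveloping algebras.

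For (i), I would apply Lemma~\ref{prea2c} with $\bar{A}=A_2$. By Lemma~\ref{delinv} we already know $|\langle A_2\rangle_\mathbb{F}:\mathbb{F}1_n|=2$, so irreducible $A_2$-submodules of $V$ have dimension~$2$ and (i) is immediate.

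For (ii), the key step is to prove $\langle C\rangle_\mathbb{F}=\mathbb{K}$; once this is shown, Lemma~\ref{prea2c} applied to $\bar{A}=C$ gives irreducible $C$-submodules of dimension $|\mathbb{K}:\mathbb{F}1_n|=m$. The inclusion $\langle C\rangle_\mathbb{F}\subseteq\mathbb{K}$ is given, so only the reverse needs work. Set $d=|\langle C\rangle_\mathbb{F}:\mathbb{F}1_n|$; since $\langle C\rangle_\mathbb{F}$ is a subfield of $\mathbb{K}$ and $|\mathbb{K}:\mathbb{F}1_n|=m$ is odd, $d$ divides $m$ and in particular $d$ is odd.

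Now $A_2$ and $C$ commute and generate $A$, so inside the field $\Delta$ the subfields $\langle A_2\rangle_\mathbb{F}$ and $\langle C\rangle_\mathbb{F}$ have compositum equal to $\langle A\rangle_\mathbb{F}=\Delta$. The standard formula for the compositum of two subfields of a finite field yields
\[
2m \;=\; |\Delta:\mathbb{F}1_n| \;=\; \mathrm{lcm}\bigl(|\langle A_2\rangle_\mathbb{F}:\mathbb{F}1_n|,\,d\bigr) \;=\; \mathrm{lcm}(2,d) \;=\; 2d,
\]
where the last equality uses that $d$ is odd. Hence $d=m$ and $\langle C\rangle_\mathbb{F}=\mathbb{K}$, as required.

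I do not anticipate a real obstacle here. The only point requiring a bit of care is the compositum step, which relies on $\Delta$ being a field (Proposition~\ref{singdef}) together with the fact that $\langle A_2\rangle_\mathbb{F}$ and $\langle C\rangle_\mathbb{F}$ commute (so their product is a subfield, necessarily containing all of $A$ and therefore equal to $\Delta$); once this is set up, the parity argument forcing $d=m$ is a one-line calculation.
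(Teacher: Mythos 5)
Your proposal is correct and follows essentially the same route as the paper: part (i) from Lemmas~\ref{delinv} and \ref{prea2c}, and part (ii) by noting that $\Delta$ is the compositum of $\langle A_2\rangle_\mathbb{F}$ and $\langle C\rangle_\mathbb{F}$ and counting degrees. The only cosmetic difference is that you rule out even degree for $\langle C\rangle_\mathbb{F}$ via divisibility into the odd number $m$, whereas the paper cites $\langle A_2\rangle_\mathbb{F}\cap\mathbb{K}=\mathbb{F}1_n$ from Lemma~\ref{delinv}; both are immediate.
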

\begin{proof}
 Lemmas~\ref{delinv} and \ref{prea2c} give (i).
 Since $\Delta$ is the  compositum of 
 $\langle C\rangle_{\mathbb{F}}$ and  
 $\langle A_2\rangle_{\mathbb{F}}$,  
 $\langle C\rangle_\mathbb{F}\subseteq \mathbb{K}$, and
 $\langle A_2\rangle_{\mathbb{F}} \cap \mathbb{K}$ 
 $=$ $\mathbb{F}1_n$, we get that
 $|\langle C \rangle_\mathbb{F}  : \mathbb{F}1_n| =m$
 i.e. $\langle C\rangle_\mathbb{F}=\mathbb{K}$,
 as required for (ii). 
\end{proof}
%
%
\begin{theorem}
\label{firstclassif}
 $G$ but not $A$ is primitive if and only if $A$ has an 
 imprimitivity system 
 of size $2$,  
 $C$ is conjugate to $\langle (x,x) \rangle$ 
 for some primitive subgroup 
$\langle x \rangle$ of 
 $\mathrm{GL}(m, \mathbb{F})$, and 
 $a^2 = g^2 = [a, g] = -1_n$.
\end{theorem}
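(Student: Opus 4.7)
The plan is to prove the equivalence in two directions, with the forward direction being largely structural and the reverse requiring a careful case analysis on possible block sizes of $G$.

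For the forward implication, assume $G$ is primitive and $A$ imprimitive. Combining Theorem~\ref{evenimprim} with Proposition~\ref{likersim} gives $|A|\mid 2(q^m-1)$; since $q\equiv 3\pmod{4}$ and $m$ is odd, $q^m-1\equiv 2\pmod{4}$, so extracting the Sylow $2$-part forces $|A_2|=4$ (the lower bound $|A_2|\geq 4$ coming from Theorem~\ref{konzal}). Hence $|G_2|=8$, and among the groups in Theorem~\ref{konzal}'s list only $Q_8$ has order $8$, yielding $a^2=g^2=[a,g]=-1_n$. Given an $A$-block system $\{V_1,V_2\}$, Proposition~\ref{likesim} and Lemma~\ref{a2cirred} show each $V_i$ is a $C$-irreducible module of dimension $m$; since $C$ is central in $G$, Clifford's theorem makes $V$ a homogeneous $C$-module, so $V_1\cong V_2$ and a suitable basis realises $C=\langle(x,x)\rangle$ with $x=c|_{V_1}$. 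For primitivity of $\langle x\rangle$, I would argue by contradiction: if $|x|\mid k(q^{m/k}-1)$ for some odd prime $k\mid m$, then since $m/k$ is odd, $4\mid q^{m/k}+1$, so a short check gives $|A|\mid k(q^{n/k}-1)$; Proposition~\ref{likersim} would then produce an $A$-imprimitivity system of size $k\neq 2$, violating Theorem~\ref{evenimprim}.

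For the reverse implication, imprimitivity of $A$ follows from arithmetic: $|x|$ is odd and divides $(q^m-1)/2$, hence $|A|=4|x|\mid 2(q^m-1)$. The crux is proving $G$ primitive. My approach uses the decomposition $V=V^{(1)}\otimes_\mathbb{F} W$, where $V^{(1)}$ is the $2$-dimensional (absolutely irreducible, by Proposition~\ref{absirredcase1}) $Q_8$-module and $W=\mathbb{F}^m$ supports the $\langle x\rangle$-action. Then $\mathrm{End}_{Q_8}(V)\cong\mathrm{Mat}(m,\mathbb{F})$, and every $Q_8$-submodule of $V$ has the form $V^{(1)}\otimes U$ for some $U\leq W$. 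Suppose $G$ has a block system of size $r>1$; since $m$ is odd and $|G|=8|x|$, I would write $r=2^ar'$ with $a\in\{0,1\}$ and $r'$ odd dividing $\gcd(m,|x|)$, and dispose of three cases. If $r=2$, then $C$ (of odd order) acts trivially on the pair, making each block a $\mathbb{K}$-subspace (where $\mathbb{K}=\langle C\rangle_\mathbb{F}$); this yields a $\mathbb{K}$-block system, contradicting that $G$ is primitive in $\mathrm{GL}(2,\mathbb{K})$---which holds by Proposition~\ref{secchar}, since $G_2\cong Q_8$ is primitive there as $-1$ is not a square in $\mathbb{K}$. If $r$ is odd and $r>1$, the block stabiliser is $Q_8\times\langle c^r\rangle$ by coprimality, the block equals $V^{(1)}\otimes W_1$ for some $\langle x^r\rangle$-invariant $W_1\leq W$ of dimension $m/r$, and applying $C$ yields an $\langle x\rangle$-imprimitivity system $\{x^iW_1\}_{i=0}^{r-1}$ on $W$, contradicting primitivity of $\langle x\rangle$. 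Finally, if $r=2r'>2$ with $r'$ odd, coprimality forces the stabiliser to be $P\times C_0$ with $P$ cyclic of order $4$ in $Q_8$; its generator squares to $-1_n$, so $\mathbb{F}[P]\cong\mathbb{F}_{q^2}$, forcing $\dim_\mathbb{F} U_1$ to be even, while $\dim_\mathbb{F} U_1=m/r'$ is odd.

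I expect the main obstacle to be this third case ($r=2r'>2$), where the parity argument rests on noting that an order-$4$ element of $Q_8$ acts on $V$ with irreducible minimal polynomial $t^2+1$ over $\mathbb{F}$. Everything else amounts to standard Clifford-theoretic and arithmetic manipulations, together with the tensor-decomposition setup in the reverse direction.
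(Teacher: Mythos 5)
The one genuine gap is in your forward direction, at the point where you prove $\langle x\rangle$ primitive. You argue: if $|x|$ divides $k(q^{m/k}-1)$ for an odd prime $k\mid m$, then $|A|=4|x|$ divides $k(q^{n/k}-1)$, and ``Proposition~\ref{likersim} would then produce an $A$-imprimitivity system of size $k$.'' Proposition~\ref{likersim} as stated only asserts that $A$ is \emph{imprimitive}; it says nothing about how many components a resulting system has, and imprimitivity of $A$ is already part of your hypothesis, so the statement you cite yields no contradiction with Theorem~\ref{evenimprim}. Nor is a purely numerical contradiction available: $|A|\mid 2(q^m-1)$ and $|A|\mid k(q^{n/k}-1)$ can hold simultaneously. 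The step is repairable, but needs an extra argument: either prove the block-count refinement of Proposition~\ref{likersim} (if $|A|$ divides $k(q^{n/k}-1)$ then a generator $\alpha$ of $A$ satisfies $\alpha^k\in E\cong\mathrm{GF}(q^{n/k})\subseteq\Delta$, and the $E$-lines $E,E\alpha,\ldots,E\alpha^{k-1}$ form a system with exactly $k$ components); or argue geometrically as the paper does, carrying a $\langle x\rangle$-system on one component $V_i$ around by $A$ to get an $A$-system with more than two components; or, simplest inside your own framework, observe that $\langle x\rangle$-blocks $U_1,\ldots,U_k$ of $W$ give $G$-blocks $V^{(1)}\otimes U_j$ of $V$, contradicting the hypothesis that $G$ is primitive directly.

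Apart from this, the proposal is sound, and your converse is a genuinely different route from the paper's. The paper excludes odd block numbers arithmetically (using primitivity of $\langle x\rangle$ via the contrapositive of Proposition~\ref{likersim}) and then kills $k=2$ by an explicit computation in the normaliser of $\Delta^{\times}$, writing $g=db$ and forcing an element of order $4$ into $\langle x\rangle_{\mathbb{F}}$; you instead use the tensor decomposition $V\cong V^{(1)}\otimes W$ and a case analysis on the number $r$ of blocks, pushing odd $r$ down to a $\langle x\rangle$-system on $W$ and handling even $r$ by the observation that a block stabiliser contains an order-$4$ element squaring to $-1_n$, so each block is a $\mathrm{GF}(q^2)$-space of even $\mathbb{F}$-dimension against the odd value $n/r=m/r'$. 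That parity argument should be applied to the block itself (for $r=2r'>2$ the stabiliser contains only a cyclic subgroup of $Q_8$, so the block need not have the form $V^{(1)}\otimes U_1$), and note it also disposes of $r=2$, so your first and third cases can be merged and the appeal to Proposition~\ref{secchar} over $\mathbb{K}$ avoided. Your derivation of $|A_2|=4$ straight from $|A|\mid 2(q^m-1)$ is a clean shortcut compared with the paper's analysis of $\ker\phi$.
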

\begin{proof}
 Suppose $G$ but not $A$ is primitive.
 By Theorem~\ref{evenimprim},
 $A$ has an imprimitivity system $\mathcal{S} = \{ V_1, V_2 \}$.
 Let $\phi$ be the permutation representation of $A$ in 
 $\mathrm{Sym}(2)$ arising from the action on $\mathcal{S}$, 
 and set $\ker \phi=A'$.
 Since $|C|$ is odd, necessarily $C \leq A'$, 
and then by
 Lemmas~\ref{prea2c} and \ref{a2cirred},
 $C = \langle (x,x) \rangle$ up to conjugacy,
 where $C_1 = \langle x \rangle \leq \mathrm{GL}(m, \mathbb{F})$ 
 is irreducible. Since $\langle C\rangle_\mathbb{F}$ $\leq$
 $\langle A'\rangle_\mathbb{F}$
 and $|\Delta : \langle C\rangle_\mathbb{F}| =2$, 
 either $\langle A' \rangle_\mathbb{F}$ $=$ 
 $\Delta$ or $\langle A'\rangle_\mathbb{F} = 
 \langle C\rangle_\mathbb{F}$. 
 If $\langle A' \rangle_\mathbb{F} = 
 \Delta$ then $A'$ is irreducible, 
 which is definitely not true.
 Thus $\langle A'\rangle_\mathbb{F} = 
 \langle C\rangle_\mathbb{F}$.    
 An element $h\neq 1$ of $A' \cap A_2$ has 2-power order;
 but $h$ $\in$  $\langle C\rangle_\mathbb{F}$, which
 does not contain an element of order 4, so $h = -1_n$. 
 Consequently 
 $A' = (A' \cap A_2)\times C = \langle -1_n, C \rangle$ and 
 $A'|_{V_i} = \langle -1_m, C_1 \rangle$. Since 
 $A'|_{V_i}$ is primitive (otherwise we get an $A$-system of
 imprimitivity of size greater than 2, violating 
 Theorem~\ref{evenimprim}), $C_1$ is primitive.
 From $|A: A'|=2$ and $|A'| = 2|C|$ we infer $|A_2| =4$
 and $|G_2|=8$. 
 By Theorem~\ref{konzal}, then, $G_2 \cong Q_8$.
 We have seen that $|a|=4$, and 
 $g \notin Z(G_2) = \langle -1_n \rangle$
 implies 
 $a^2=g^2= [a,g] = -1_n$.

 Now we prove the converse. Suppose 
 $C = \langle (x,x) \rangle$ where 
 $\langle x \rangle := C_1$ is a primitive subgroup of 
 $\mathrm{GL}(m, \mathbb{F})$, and $G_2 \cong Q_8$.
 Also suppose $\mathcal{T}$ $=$ $\{  W_1 , \ldots \, \! ,  W_k\}$
 is a $G$-system of imprimitivity, and let $\varphi
 : G \rightarrow \mathrm{Sym}(k)$ be the associated 
 permutation representation.
 Of course, $\mathcal{T}$ is simultaneously an $A$-system of 
 imprimitivity, so that
 if $k$ is divisible by an odd prime $t$
 then by Lemma~\ref{sizeprim}, $A$ has an imprimitivity 
 system of size $t$.
 By Proposition~\ref{likesim}, $|A| = 4|C_1|$ divides 
 $t(q^{m/t}-1)(q^{m/t}+1)$.  
 Some prime divisor $r$ of $|C_1|$ must divide $q^{m/t}+1$
 by Proposition~\ref{likersim}, and therefore $r$ divides $q^m+1$.
 However $|C_1|$ divides $q^m-1$, so $r$ divides $2q^m$, 
 a contradiction. 
 Hence $k=2$.
%
%
%
%
 
 Since
 $\varphi(A_2)$ is transitive, after replacing $G$ 
 with a suitable conjugate we have $a = (u, v) w$
 where $u, v$ $\in$ $\mathrm{GL}(m, \mathbb{F})$ and
$$
 w = 
 \left( \begin{array}{cc}
 0_m & 1_m \\
  1_m & 0_m 
 \end{array}
 \right) ,
$$
 with $C$ retaining its original form
 $\langle (x,x) \rangle$. The relation 
 $a^2 = -1_n$ yields $v$ $=$ $-u ^{-1}$.

 Recall Proposition~\ref{normcentr}.
 Since $a$ centralises $C$, $u$ centralises $C_1$,
 and therefore $u \in \langle C_1\rangle_\mathbb{F}$.
 Note that $d =  (1_m, -1_m)$ inverts $a$, so
  it normalises but does not centralise 
 $\Delta^{\! \times}$. Certainly $g$ normalises  
 $\Delta^{\! \times}$, so
  $g = db$ for some $b \in \Delta^{\! \times}$.
 
 Over $\langle C \rangle_\mathbb{F}$, $\Delta$ has basis
 $\{ 1_n, a\}$.
 Thus if 
%
 $g \in \ker \varphi$ then $b$ $=$ 
 $(y, y)$ for some
 $y\in \langle C_1\rangle_\mathbb{F}$. Since $g^2 = -1_n$
 it follows that $y^2 = -1_m$,
 which is impossible.

 Suppose $g \notin \ker \varphi$, so that
 $b$ $=$ $(u y ,-u^{-1} y)w$ for some 
 $y \in \langle C_1\rangle_\mathbb{F}$.
 Once again, $g^2 = -1_n$ implies 
 $\langle C_1\rangle_\mathbb{F}$ has an element
 of order 4. This final contradiction proves that $G$ is primitive. 
\end{proof}

 We next give an explicit description of 
 the groups in Theorem~\ref{firstclassif},
 which can
 be extracted from the proof of the theorem.
 An auxiliary fact is needed: for any prime 
 $p$, $-1$ is a sum of two squares mod $p$.
\begin{proposition}
 Let $C_1=\langle x \rangle$ be a cyclic primitive subgroup of 
 $\mathrm{GL}(m, \mathbb{F})$, and choose 
 $e , f \in  \langle C_1\rangle_\mathbb{F}$ such that
 $e^2+f^2=-1_m$. 
 Define $C = \langle (x, x)\rangle$ and 
 $G_2 = \langle a, g\rangle$, 
 where
$$a =\left( \begin{array}{rr}
0_m & 1_m \\
-1_m & 0_m 
\end{array} \right) , \ \ \ g = \left( \begin{array}{lrc}
e & f \\
f & -e 
\end{array} \right) .
$$
 Then $G = G_2 \times C$ is a nonabelian nilpotent 
 primitive subgroup of $\mathrm{GL}(n, \mathbb{F})$
 with an abelian imprimitive subgroup of index $2$.
 Moreover, any group of this kind with the 
 same order as $G$ is conjugate to $G$.
\end{proposition}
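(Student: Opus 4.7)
The plan is to verify the hypotheses of Theorem~\ref{firstclassif} (converse direction) so as to get primitivity of $G$, and then to reduce uniqueness to the degree-$2$ classification applied over the subfield $\mathbb{K} = \langle C_1\rangle_\mathbb{F}$.

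First I would perform the matrix identities. Since $e, f$ lie in the commutative field $\langle C_1\rangle_\mathbb{F}$, direct computation gives $a^2 = -1_n$, $g^2 = (e^2+f^2)1_n = -1_n$, and $ag = -ga$; hence $G_2 = \langle a, g\rangle \cong Q_8$. The same commutativity makes $a$ and $g$ commute with $c = (x, x)$, so $G = G_2 \times C$ is nilpotent as a direct product of a $2$-group with a cyclic odd-order group. Existence of the required $e, f$ follows from the quoted auxiliary fact: $-1$ is a sum of two squares in $\mathbb{F}_p$, and $\mathbb{F}_p 1_m \leq \langle C_1\rangle_\mathbb{F}$.

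Set $A = \langle a\rangle \times C$, cyclic of order $4|C_1|$, and write $V = V_1 \oplus V_2$ for the obvious decomposition. Then $C$ stabilises each $V_i$ while $a$ interchanges them, so $\mathcal{S} = \{V_1, V_2\}$ is an $A$-system of imprimitivity. To see $A$ is irreducible, note that $\langle A\rangle_\mathbb{F} = \langle C\rangle_\mathbb{F} + a\langle C\rangle_\mathbb{F}$ is the quadratic extension of $\langle C_1\rangle_\mathbb{F}$ obtained by adjoining $a$: this is a field because $-1$ is not a square in $\langle C_1\rangle_\mathbb{F}$ (since $|\langle C_1\rangle_\mathbb{F}| \equiv 3 \bmod 4$). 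Thus $\langle A\rangle_\mathbb{F}$ has $\mathbb{F}$-dimension $n$, and $A$ is irreducible by the corollary to Proposition~\ref{singdef}. The three identities $a^2 = g^2 = [a, g] = -1_n$ being in place, the converse direction of Theorem~\ref{firstclassif} now gives primitivity of $G$ and exhibits $A$ as the desired abelian imprimitive subgroup of index $2$.

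For uniqueness, let $G' = G_2' \times C'$ be another group of this kind with $|G'| = |G|$. By Theorem~\ref{firstclassif}, after conjugating in $\mathrm{GL}(n, \mathbb{F})$ we may assume $C' = \langle(x', x')\rangle$ for some primitive cyclic $\langle x'\rangle \leq \mathrm{GL}(m, \mathbb{F})$ of the same order as $C_1$. Proposition~\ref{conjab} conjugates $\langle x'\rangle$ to $\langle x\rangle$ inside $\mathrm{GL}(m, \mathbb{F})$, and its block-diagonal extension arranges $C' = C$. Now $G_2$ and $G_2'$ are $Q_8$ subgroups of $Z_{\mathrm{GL}(n, \mathbb{F})}(C)$, which by Schur's lemma (the two $V_i$ are isomorphic irreducible $C$-modules with endomorphism ring $\mathbb{K}$) equals $\mathrm{GL}(2, \mathbb{K})$. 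Since $|\mathbb{K}| \equiv 3 \bmod 4$, $\mathbb{K}$ contains no square root of $-1$, so no $Q_8$ in $\mathrm{GL}(2, \mathbb{K})$ can be monomial (its index-$2$ cyclic subgroup has no $\mathbb{K}$-eigenvector); thus both $G_2, G_2'$ are nilpotent primitive in $\mathrm{GL}(2, \mathbb{K})$. Corollary~\ref{conl} applied over $\mathbb{K}$ makes them conjugate in $\mathrm{GL}(2, \mathbb{K})$, and the conjugating element—centralising $C$—carries $G'$ onto $G$. The main obstacle is precisely this last step: identifying the centraliser of $C$ with $\mathrm{GL}(2, \mathbb{K})$, confirming that the two $Q_8$ realisations inside it are primitive as $\mathbb{K}$-linear groups of degree $2$, and then invoking Corollary~\ref{conl} at that enlarged base field.
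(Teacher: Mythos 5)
Your argument is correct, but it is worth noting that the paper never writes out a proof of this proposition at all: it simply remarks that the description ``can be extracted from the proof of the theorem'' (Theorem~\ref{firstclassif}), with the conjugacy statement effectively deferred to the same canonical-form analysis (and, in spirit, to the later reduction of Theorems~\ref{explwehr} and \ref{iecagain}). Your existence part matches that intended extraction: you verify $a^2=g^2=[a,g]=-1_n$ and the commutation with $C$, show $A=\langle a\rangle\times C$ is irreducible because $\langle C\rangle_\mathbb{F}[a]$ is a field of $\mathbb{F}$-degree $n$ (using $q^m\equiv 3$ mod $4$), and then invoke the converse direction of Theorem~\ref{firstclassif}. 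Where you genuinely diverge is the uniqueness: instead of re-running the normal-form computation inside the proof of Theorem~\ref{firstclassif} or waiting for Theorem~\ref{iecagain}, you conjugate $C'$ onto $C$ via Proposition~\ref{conjab}, identify $Z_{\mathrm{GL}(n,\mathbb{F})}(C)$ with $\mathrm{GL}(2,\mathbb{K})$ by Schur's Lemma, check that the two copies of $Q_8$ are primitive over $\mathbb{K}$, and apply Corollary~\ref{conl} over the larger field. This is a concrete, self-contained anticipation of exactly the mechanism the paper uses later (degree $2$ over $\mathrm{GF}(q^m)$), and it has the advantage of not needing Theorem~\ref{explwehr}; the paper's route is shorter once Theorem~\ref{iecagain} is available. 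One small simplification: since the two Sylow $2$-subgroups are nonabelian $2$-subgroups of $\mathrm{GL}(2,\mathbb{K})$ with $2\neq p$, Theorem~\ref{isoeqconj} already gives their conjugacy, so your verification that they are primitive over $\mathbb{K}$ (needed only to quote Corollary~\ref{conl}) can be dispensed with.
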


 Our last undertaking in this section is to extend 
 Corollary~\ref{conl}.
 We use the following version 
 of Theorem~\ref{prewehr}.
\begin{theorem}
\label{explwehr}
 Let $H$ be an irreducible but not absolutely 
 irreducible subgroup of ${\rm GL}(r,\mathbb{F})$, $r>1$. 
 For some $s>1$ dividing $r$ there is an 
 absolutely irreducible subgroup $\hat{H}\cong H$ of
 ${\rm GL}(r/s,q^s)$ such that $H$ is
 ${\rm GL}(r, q^s)$-conjugate to the 
 group $\tilde{H}$ of block diagonal matrices
$$
(h, h^{\sigma}, \ldots \, \! , 
h^{\sigma^{s-1}}) , \ \ \ \ h \in \hat{H} , 
$$
 where $\sigma$ is 
 a fixed generator 
 of $\mathrm{Gal}(\mathrm{GF}(q^s)/\mathbb{F})$, 
 and $h^{\sigma^j}$ denotes the matrix resulting from 
 entrywise action of $\sigma^j$ on $h$. 
\end{theorem}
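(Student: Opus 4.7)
The plan is to realise $\hat{H}$ via the endomorphism algebra of the natural $\mathbb{F}H$-module $V=\mathbb{F}^r$, and then to recover $H$ in the claimed block-diagonal form by extending scalars to $\mathrm{GF}(q^s)$ and decomposing into Galois twists.

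First I would invoke Schur's Lemma: the centraliser $\mathbb{L}:=\mathrm{End}_{\mathbb{F}H}(V)$ is a finite division ring, hence a field extension of $\mathbb{F}$. Set $s=|\mathbb{L}:\mathbb{F}|$, so that $\mathbb{L}\cong \mathrm{GF}(q^s)$; since $H$ is irreducible but not absolutely irreducible $s>1$, and since $V$ has $\mathbb{L}$-dimension $r/s$, $s$ divides $r$. Regarding $V$ as an $\mathbb{L}$-vector space on which $H$ acts $\mathbb{L}$-linearly and absolutely irreducibly embeds $H$ as the required $\hat{H}\leq \mathrm{GL}(r/s,q^s)$.

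Next I would extend scalars from $\mathbb{F}$ to $\mathrm{GF}(q^s)$. Using the canonical isomorphism
$$
\mathbb{L}\otimes_{\mathbb{F}}\mathrm{GF}(q^s)\cong \prod_{j=0}^{s-1}\mathrm{GF}(q^s)
$$
indexed by $\mathrm{Gal}(\mathrm{GF}(q^s)/\mathbb{F})=\langle \sigma\rangle$, the $\mathrm{GF}(q^s)H$-module $W:=V\otimes_{\mathbb{F}}\mathrm{GF}(q^s)$ splits as $W=\bigoplus_{j=0}^{s-1}W_j$, where $W_j$ is the $r/s$-dimensional summand on which the left tensor factor $\mathbb{L}$ acts through $\sigma^j$. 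A direct check identifies each $W_j$, as a $\mathrm{GF}(q^s)H$-module, with the Galois twist of the natural $\hat{H}$-module by $\sigma^j$. Selecting a $\mathrm{GF}(q^s)$-basis of $W$ adapted to this decomposition, with the $W_0$-block realising $\hat{H}$ in its prescribed matrix form, the action of $h\in H$ on $W$ is $(h,h^{\sigma},\ldots,h^{\sigma^{s-1}})$, which exhibits the asserted $\mathrm{GL}(r,q^s)$-conjugacy between $H$ and $\tilde{H}$.

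The main obstacle is the identification of the $W_j$ as Galois twists. This is standard once the splitting of $\mathbb{L}\otimes_{\mathbb{F}}\mathrm{GF}(q^s)$ is in hand, but sign and index conventions are easy to tangle; in practice the whole normal form can simply be read off from the proof of \cite[1.19, p.12]{Wehrfritz} already used for Theorem~\ref{prewehr}, which constructs the block-diagonal form explicitly.
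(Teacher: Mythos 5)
Your proposal is correct and is essentially the argument the paper relies on: the paper simply cites \cite[9.21, p.154]{Isaacs} and its proof, which is exactly the Schur's Lemma/endomorphism field construction of $\hat{H}$ followed by the splitting of $\mathbb{L}\otimes_{\mathbb{F}}\mathrm{GF}(q^s)$ into Galois components that you sketch. The only part you leave as a ``direct check''---identifying the summands $W_j$ with the $\sigma^j$-twists of the natural $\hat{H}$-module---is precisely the computation carried out in the cited proof, so there is no substantive difference in approach.
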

\begin{proof}
 See \cite[9.21, p.154]{Isaacs} and the proof of that 
 result.
\end{proof}
\begin{theorem}
\label{iecagain}
 Nilpotent primitive subgroups $G$ and $H$ of 
 $\mathrm{GL}(n,\mathbb{F})$ 
 are conjugate in  $\mathrm{GL}(n,\mathbb{F})$
 if and only if they are isomorphic.
 \end{theorem}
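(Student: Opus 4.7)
The plan is to reduce to the degree-$2$ absolutely irreducible case settled by Corollary~\ref{conl}, and then descend from the necessary field extension via the Deuring-Noether Theorem. The ``only if'' direction is trivial, so I would focus on showing that $G \cong H$ forces $\mathrm{GL}(n,\mathbb{F})$-conjugacy.

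First, if $G$ and $H$ are abelian, Proposition~\ref{conjab} finishes the job. Otherwise Proposition~\ref{2odegree} and Theorem~\ref{konzal} give $n=2m$ with $m$ odd and $q\equiv 3$ mod $4$. I would apply Theorem~\ref{explwehr} to $G$: since $G$ is not absolutely irreducible (Proposition~\ref{absirredcase1}), there are $s>1$ dividing $n$ and an absolutely irreducible $\hat G\leq \mathrm{GL}(n/s,q^s)$ such that $G$ is $\mathrm{GL}(n,q^s)$-conjugate to the block-diagonal group $\tilde G$ with entries $(\hat g,\hat g^\sigma,\ldots,\hat g^{\sigma^{s-1}})$, $\hat g\in\hat G$. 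Since $\hat G$ is nonabelian absolutely irreducible nilpotent primitive, Proposition~\ref{absirredcase1} forces $n/s=2$; hence $s=m$ and $\hat G\leq \mathrm{GL}(2,q^m)$. I would construct $\hat H\leq \mathrm{GL}(2,q^m)$ and $\tilde H$ from $H$ in the same way.

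The pivot of the argument is Corollary~\ref{conl}, which applies in $\mathrm{GL}(2,q^m)$ because $m$ odd implies $q^m\equiv 3$ mod $4$. From $\hat G\cong G\cong H\cong\hat H$ I obtain $u\in \mathrm{GL}(2,q^m)$ with $u^{-1}\hat G u=\hat H$. A short verification---using the identity $(u^{-1}\hat g u)^{\sigma^j}=(u^{\sigma^j})^{-1}\hat g^{\sigma^j}u^{\sigma^j}$---shows that the block-diagonal matrix $\tilde u:=(u,u^\sigma,\ldots,u^{\sigma^{m-1}})$ conjugates $\tilde G$ to $\tilde H$ in $\mathrm{GL}(n,q^m)$. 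Composing with the conjugators produced by Theorem~\ref{explwehr}, I conclude that $G$ and $H$ are $\mathrm{GL}(n,q^m)$-conjugate.

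Finally, the Deuring-Noether Theorem \cite[1.22, p.26]{HuppertBlackburn}, applied exactly as in the proof of Theorem~\ref{isoeqconj}, descends this extension-field conjugacy back to $\mathrm{GL}(n,\mathbb{F})$. The main subtlety in the plan is checking the compatibility of the Galois action $\sigma$ with the block-diagonal form of Theorem~\ref{explwehr} so that $\tilde u$ really is an $\mathbb{F}_{q^m}$-matrix intertwining $\tilde G$ and $\tilde H$; once this bookkeeping is in place, the result follows by assembly of previously proved facts.
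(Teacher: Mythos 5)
Your proposal is correct and follows essentially the same route as the paper: reduce via Theorem~\ref{explwehr} and Proposition~\ref{absirredcase1} to absolutely irreducible subgroups of $\mathrm{GL}(2,q^m)$, apply Corollary~\ref{conl} there, and descend to $\mathrm{GL}(n,\mathbb{F})$ by the Deuring--Noether Theorem. The only differences are that you spell out details the paper leaves implicit (the abelian case via Proposition~\ref{conjab} and the explicit block-diagonal conjugator $\tilde u$), which is fine.
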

\begin{proof}
 Suppose $G \cong H$.
 Since $G$ and $H$ are not
 absolutely irreducible ($n>2$) by Proposition~\ref{absirredcase1},
 there are absolutely irreducible 
 subgroups $\hat{G}$, $\hat{H}$ of $\mathrm{GL}(2,q^m)$, and 
 subgroups $\tilde{G}$, $\tilde{H}$ of $\mathrm{GL}(n,q^m)$
 constructed from  $\hat{G}$, $\hat{H}$ as in 
 Theorem~\ref{explwehr}, such that $G$ is conjugate to 
 $\tilde{G}$ and $H$ is conjugate to 
 $\tilde{H}$. The groups $\hat{G}$ and $\hat{H}$ are primitive, 
 so they are $\mathrm{GL}(2,q^m)$-conjugate by Corollary~\ref{conl}.
 Then it is easy to see that $\tilde{G}$ and $\tilde{H}$
 are $\mathrm{GL}(n,q^m)$-conjugate. 
 Hence $G$ and $H$ are $\mathrm{GL}(n,\mathbb{F})$-conjugate 
 by the Deuring-Noether Theorem.
\end{proof}

\section{Summary and concluding remarks}
\label{summary}

 Suppose $q\equiv 3$ mod 4. Let $G$ be nonabelian 
 nilpotent primitive, and let $C$ be the 
 direct product of the odd order Sylow subgroups of 
 $G$. Previously we have established that one of 
 the following situations must occur.
\begin{enumerate}
\item $n=2$, $G_2$ is absolutely irreducible, 
 and $C$ is scalar.
\item $n=2m$, $m>1$ odd, $G_2 \cong Q_8$, and
 $G$ has an imprimitive cyclic subgroup of index 2. 
\item $n=2m$, $m>1$ odd, and
 $G$ has a primitive cyclic subgroup of index 2. 
\end{enumerate}
 In all cases, $G_2$ is either $Q_8$,
  or a Sylow 2-subgroup of $\mathrm{GL}(n,\mathbb{F})$,
  or is generalised  quaternion or dihedral of order at least
 16. 
%

 Suppose $G \cong Q_8 \times C$
 and $n>2$. 
 Then $|C|$ divides $q^m-1$,
 so an index 2 subgroup of $G$ has order dividing 
 $2(q^m-1)$. Such a subgroup of 
 $G$ is imprimitive by Proposition~\ref{likersim}.
 Hence $|G_2| > 8$ in case 3, 
 %
 and for each possible value of $|C|$, i.e. 
 the order of an odd order cyclic primitive subgroup 
 of $\mathrm{GL}(m, \mathbb{F})$, 
 there is a single conjugacy class of primitive 
 subgroups of $\mathrm{GL}(n,\mathbb{F})$ isomorphic
 to $Q_8 \times C$. The $\mathrm{GL}(n,\mathbb{F})$-conjugacy 
 classes of the groups in case 3 may be counted with the
 aid of Theorem~\ref{iecagain} (cf. Corollary~\ref{cclcount}).  

\subsection*{Acknowledgment}
 The first author received support 
 from the Enterprise Ireland International Collaboration 
 Programme, and is also indebted
 to the Mathematics Department at NUI, Galway, where
 work on this paper was carried out.

\bibliographystyle{amsplain}

\end{document}